\providecommand{\tabularnewline}{\\}
\theoremstyle{plain}
\newtheorem{lem}{\protect\lemmaname}
\theoremstyle{plain}
\newtheorem{thm}{\protect\theoremname}
\newcommand{\executeiffilenewer}[3]{%
\ifnum\pdfstrcmp{\pdffilemoddate{#1}}%
{\pdffilemoddate{#2}}>0%
{\immediate\write18{#3}}\fi%
}
\newcommand{\includesvg}[2][scale=1]{%
\executeiffilenewer{#2.svg}{#2.pdf}%
{inkscape -z -D --file=#2.svg --export-pdf=#2.pdf}%
\includegraphics[#1]{#2.pdf}%
}
\providecommand{\tabularnewline}{\\}
\providecommand{\lemmaname}{Lemma}
\providecommand{\theoremname}{Theorem}
\providecommand{\lemmaname}{Lemma}
\providecommand{\theoremname}{Theorem}
\newtheorem{assumption}{Assumption}
\DeclareFontFamily{OT1}{pzc}{}
\DeclareFontShape{OT1}{pzc}{m}{it}{<-> s * [1.200] pzcmi7t}{}
\DeclareMathAlphabet{\mathpzc}{OT1}{pzc}{m}{it}
\providecommand{\lemmaname}{Lemma}
\providecommand{\theoremname}{Theorem}
\definecolor{airforceblue}{rgb}{0.36, 0.54, 0.66}
\definecolor{ballblue}{rgb}{0.13, 0.67, 0.8}
\definecolor{alizarin}{rgb}{0.82, 0.1, 0.26}
\definecolor{asparagus}{rgb}{0.53, 0.66, 0.42}
\definecolor{applegreen}{rgb}{0.55, 0.71, 0.0}
\definecolor{armygreen}{rgb}{0.29, 0.33, 0.13}
\definecolor{amber(sae/ece)}{rgb}{1.0, 0.49, 0.0}
\definecolor{coquelicot}{rgb}{1.0, 0.22, 0.0}
\definecolor{ao(english)}{rgb}{0.0, 0.5, 0.0}
\renewcommand\footnotemark{}
\providecommand{\lemmaname}{Lemma}
\providecommand{\theoremname}{Theorem}
\begin{document}
\title{\textbf{Risk-Aware MMSE Estimation}}
\author{Dionysios S. Kalogerias, Luiz F. O. Chamon,\\
George J. Pappas, and Alejandro Ribeiro\thanks{Contact (e-mail): \{dionysis, luizf, pappasg, aribeiro\}@seas.upenn.edu.}\\
$\,$\\
\textit{\normalsize{}Department of Electrical \& Systems Engineering}\\
\textit{\normalsize{}University of Pennsylvania}}
\maketitle
\begin{abstract}
Despite the simplicity and intuitive interpretation of Minimum Mean
Squared Error (MMSE) estimators, their effectiveness in certain scenarios
is questionable. Indeed, minimizing squared errors on average does
not provide any form of stability, as the volatility of the estimation
error is left unconstrained. When this volatility is statistically
significant, the difference between the average and realized performance
of the MMSE estimator can be drastically different. To address this
issue, we introduce a new risk-aware MMSE formulation which trades
between mean performance and risk by explicitly constraining the expected
predictive variance of the involved squared error. We show that, under
mild moment boundedness conditions, the corresponding risk-aware optimal
solution can be evaluated explicitly, and has the form of an appropriately
biased nonlinear MMSE estimator. We further illustrate the effectiveness
of our approach via several numerical examples, which also showcase
the advantages of risk-aware MMSE estimation against risk-neutral
MMSE estimation, especially in models involving skewed, heavy-tailed
distributions.
\end{abstract}
\textbf{Keywords.} MMSE Estimation, Constrained Bayesian Estimation,
Risk-Aware Optimization, Risk Measures.\hypersetup{linkcolor=ao(english)}\hypersetup{linkcolor=red}

\section{Introduction}

Critical applications require that stochastic decisions be made not
only on the basis of minimizing average losses, but also safeguarding
against less probable, though possibly catastrophic, events. Examples
appear naturally in many areas, including wireless industrial control
\cite{Ahlen2019}, energy \cite{Bruno2016,Moazeni2015}, finance \cite{Markowitz1952,Follmer2002,Shang2018},
robotics \cite{Kim2019,Pereira2013}, LIDAR \cite{Bedi2019}, and
networking \cite{Ma2018}. In such cases, the ultimate goal is to
obtain \textit{risk-aware decision policies} that hedge against statistically
significant extreme losses, even at the cost of slightly sacrificing
performance under nominal operating conditions.
\begin{figure}
\centering\hspace{-1bp}\includegraphics[scale=0.39]{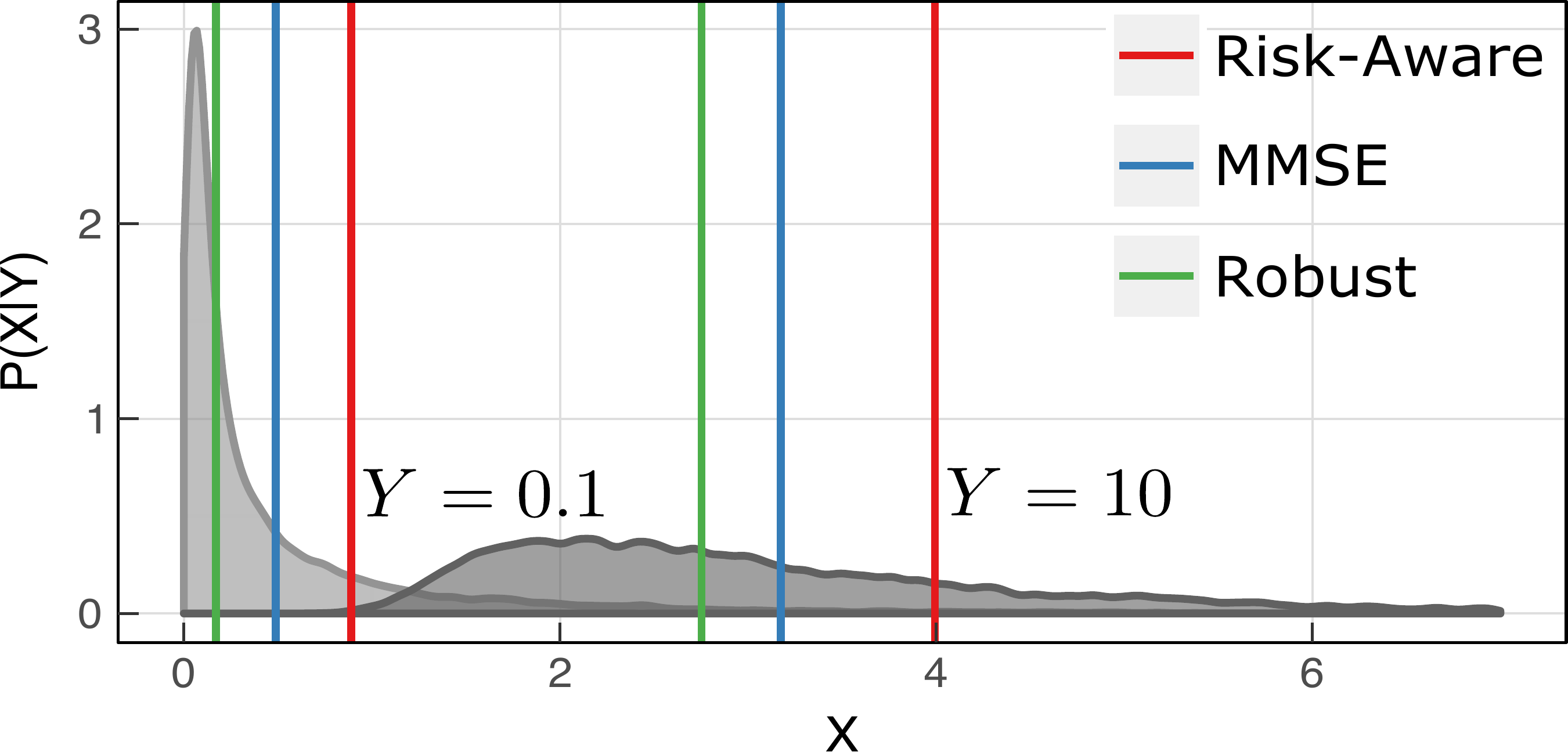}\caption{Comparison between risk-neutral and risk-aware estimates.}
\end{figure}

To illustrate this effect, consider the problem of estimating a random
state~$X$ from observations corrupted by state-dependent noise,
namely $Y|X\hspace{-0.5pt}\hspace{-0.5pt}\hspace{-0.5pt}\hspace{-0.5pt}\hspace{-0.5pt}\sim\hspace{-0.5pt}\hspace{-0.5pt}\hspace{-0.5pt}{\cal N}\hspace{-0.5pt}(X,\hspace{-0.5pt}9\hspace{-0.3pt}X^{2})$
(see Section 5). In such a setting, either small or large values of
$Y$ provide highly ambiguous evidence, since, in both cases, they
can come from either small or large values of $X$. This is corroborated
by Fig. 1, which displays the posterior distribution ${\cal P}_{X|Y}$
for two values of $Y$. While the MMSE estimator may incur severe
losses, the risk-aware estimator we develop in this work -- shown
as red vertical lines in Fig. 1 -- hedges against observation ambiguity,
therefore avoiding extreme prediction errors. Such behavior is achieved
by \textit{biasing estimates} towards the tail of the posterior ${\cal P}_{X|Y}$,
by the right amount, for each realization of $Y$. Although the risk-aware
estimator may incur larger losses on average, it performs statistically
more consistently across realizations of $Y$ (also see risk curve
in Fig. 2, Section 5). It is also worth contrasting risk-awareness
with statistical robustness, whose goal is to protect against deviations
from a nominal model. Robust estimators -- green vertical lines in
Fig. 1 -- promote insensitivity to tail events, which they designate
as statistically insignificant. On the other hand, estimators resulting
from risk-aware formulations treat these events as statistically significant,
though relatively infrequent (see Table 1).

Over the last three decades, risk-aware optimization has grown increasingly
popular and has been studied in the contexts of both decision making
and learning \cite{A.2018,Cardoso2019,W.Huang2017,Jiang2017,Kalogerias2018b,Tamar2017,Vitt2018,Zhou2018}.
In risk-aware optimization, expectations are replaced by more general
functionals, called \textit{risk measures} \cite{ShapiroLectures_2ND},
whose purpose is to quantify the statistical volatility of random
losses, as well as mean performance. Popular examples include mean-variance
functionals \cite{Markowitz1952,ShapiroLectures_2ND}, mean-semideviations
\cite{Kalogerias2018b}, and Conditional Value-at-Risk (CVaR) \cite{Rockafellar1997}.

In Bayesian estimation, risk awareness is typically achieved by replacing
the classical quadratic cost with its exponentiation~\cite{Whittle1981,Speyer1992,Moore1997,Dey1997,Dey1999}.
However, although sometimes effective, this approach is not without
limitations. First, the need for finiteness of the moment generating
function of the quadratic cost excludes heavy-tailed distributions,
which are precisely those that incur high risk. Second, the exponential
approach does not provide an interpretable way to control the trade-off
between mean performance and risk, making it hard to use in settings
where explicit risk levels must be met. Third, it does not result
in a simple, general solution as in classical MMSE estimation, challenging
its practical applicability. Finally, it does not effectively quantify
\textit{observation-induced risk}, inherent in problems where measurements
provide ambiguous evidence.
\begin{table}
\centering%
\begin{tabular}{|c||c|c|}
\cline{2-3} \cline{3-3} 
\multicolumn{1}{c|||}{\textbf{\textit{Uncertainty}}} & Frequent  & Infrequent\tabularnewline
\hline 
\hline 
Significant  & Model  & Risk\tabularnewline
\hline 
Insignificant  & Noise  & Outliers\tabularnewline
\hline 
\end{tabular}\caption{Classification of statistical uncertainty.}
\end{table}

In this work, we pose risk-aware \textit{functional} Bayesian estimation
as a constrained MMSE problem, where squared errors are minimized
on average, \textit{subject to} a bound on their expected conditional
variance. We show that, under mild conditions, this formulation results
in a convex variational problem that admits a closed-form solution.
The resulting \textit{optimal risk-aware nonlinear MMSE estimator}
is applicable to a wide variety of generative models, including highly
skewed and/or heavy-tailed distributions. The effectiveness of our
approach is confirmed via numerical examples, also demonstrating its
advantages against risk-neutral MMSE estimation.

\section{Problem Formulation }

Let $(\Omega,\mathscr{F},{\cal P})$ be a probability space, and consider
an arbitrary pair of random elements $\boldsymbol{X}:\Omega\rightarrow\mathbb{R}^{M}$
and $\boldsymbol{Y}:\Omega\rightarrow\mathbb{R}^{N}$ on $(\Omega,\mathscr{F})$.
We are interested in the problem of estimating $\boldsymbol{X}$ from
a \textit{single} realization of $\boldsymbol{Y}$ in a Bayesian setting,
namely by assuming knowledge of the joint probability distribution~${\cal P}_{(\boldsymbol{X},\boldsymbol{Y})}$.
We may conveniently think of $\boldsymbol{Y}$ as available \textit{observations},
on the basis of which we would like to make predictions about the
\textit{hidden state}~$\boldsymbol{X}$. Undoubtedly, this general
problem is fundamental in many areas, including statistics, signal
processing, machine learning, and control, and with numerous interesting
applications.

Of course, an established approach to the prediction problem considered
is to choose an estimator $\widehat{\boldsymbol{X}}\hspace{-1pt}\hspace{-1pt}\hspace{-1pt}:\hspace{-1pt}\hspace{-1pt}\hspace{-1pt}\Omega\hspace{-1pt}\hspace{-1pt}\hspace{-1pt}\rightarrow\hspace{-1pt}\hspace{-1pt}\hspace{-1pt}\mathbb{R}^{M}$
as a solution to the \textit{stochastic variational }MMSE program
\global\long\def\arraystretch{1.3}%
 
\begin{equation}
\begin{array}{rl}
\underset{\widehat{\boldsymbol{X}}:\Omega\rightarrow\mathbb{R}^{M}}{\mathrm{minimize}} & \mathbb{E}\{\Vert\boldsymbol{X}-\widehat{\boldsymbol{X}}\Vert_{2}^{2}\}\\
\mathrm{subject\,to} & \widehat{\boldsymbol{X}}\text{ is }\mathscr{Y}\text{-measurable}
\end{array},\label{eq:MMSE_Problem}
\end{equation}
where $\mathscr{Y}\hspace{-1pt}\hspace{-1pt}\hspace{-0.5pt}\equiv\hspace{-1pt}\hspace{-1pt}\sigma\hspace{-0.5pt}\{\boldsymbol{Y}\}$
denotes the sub-$\sigma$-algebra of $\mathscr{F}$ generated by $\boldsymbol{Y}$.
Problem (\ref{eq:MMSE_Problem}) is well-understood under rather general
conditions. In fact, if we \textit{merely} assume that $\boldsymbol{X}\hspace{-0.5pt}\in\hspace{-0.5pt}{\cal L}_{1}(\Omega,\mathscr{Y},{\cal P};\mathbb{R}^{M})\equiv{\cal L}_{1|\mathscr{Y}}^{M}$,
an optimal solution to (\ref{eq:MMSE_Problem}) is given by any conditional
expectation of $\boldsymbol{X}$ relative to $\boldsymbol{Y}$, i.e.,
$\widehat{\boldsymbol{X}}^{*}(\boldsymbol{Y})\equiv\mathbb{E}\{\boldsymbol{X}|\boldsymbol{Y}\}$.

%Despite the simplicity of MMSE estimation, its effectiveness is often questionable. 

However, despite the simplicity of MMSE estimation, as well as its
intuitive geometric interpretation in Hilbert space whenever $\boldsymbol{X}\in{\cal L}_{2|\mathscr{Y}}^{M}$,
its effectiveness is often questionable. Indeed, minimizing the squared
error $\Vert\boldsymbol{X}-\widehat{\boldsymbol{X}}\Vert_{2}^{2}$
\textit{in expectation} does \textit{not} provide stability or robustness,
in the sense that statistically significant variability of the resulting
\textit{optimal prediction error} is uncontrolled. In other words,
the MMSE problem (\ref{eq:MMSE_Problem}) is \textit{risk-neutral}.
This has important consequences from a practical perspective, since
the error realization~$\Vert\boldsymbol{X}-\widehat{\boldsymbol{X}}^{*}\hspace{-0.5pt}\hspace{-0.5pt}\hspace{-0.5pt}(\boldsymbol{Y})\Vert_{2}^{2}$
experienced in practice may be far from the expected value $\mathbb{E}\{\Vert\boldsymbol{X}-\widehat{\boldsymbol{X}}^{*}\hspace{-0.5pt}\hspace{-0.5pt}\hspace{-0.5pt}(\boldsymbol{Y})\Vert_{2}^{2}\}$,
or even the predictive statistic $\mathbb{E}\{\Vert\boldsymbol{X}-\widehat{\boldsymbol{X}}^{*}\hspace{-0.5pt}\hspace{-0.5pt}\hspace{-0.5pt}(\boldsymbol{Y})\Vert_{2}^{2}|\boldsymbol{Y}\}$.
It is then clear that achieving small error variability is at least
as desirable as achieving minimal errors on average.

%However, despite the simplicity of MMSE estimation, as well as its intuitive geometric interpretation in Hilbert space whenever $\boldsymbol{X}\in{\cal L}_{2|\mathscr{Y}}^{M}$, its effectiveness is often questionable. Indeed, minimizing the squared error $\Vert\boldsymbol{X}-\widehat{\boldsymbol{X}}\Vert_{2}^{2}$ in expectation does not provide stability or robustness, in the sense that statistically significant variability of the resulting optimal prediction error remains uncontrolled; in other words, the MMSE problem \eqref{eq:MMSE_Problem} is risk-neutral. This has important consequences from a practical perspective, since the error realizations $\Vert\boldsymbol{X}-\widehat{\boldsymbol{X}}^{*}\hspace{-0.5pt}\hspace{-0.5pt}\hspace{-0.5pt}(\boldsymbol{Y})\Vert_{2}^{2}$ produced by the MMSE estimate $\widehat{\boldsymbol{X}}^{*}\hspace{-0.5pt}\hspace{-0.5pt}\hspace{-0.5pt}(\boldsymbol{Y})$ for each value of $\boldsymbol{Y}$ is exactly what the analyst experiences, but may be far from the expected value $\mathbb{E}\{\Vert\boldsymbol{X}-\widehat{\boldsymbol{X}}^{*}\hspace{-0.5pt}\hspace{-0.5pt}\hspace{-0.5pt}(\boldsymbol{Y})\Vert_{2}^{2}\}$, or even the predictive statistic $\mathbb{E}\{\Vert\boldsymbol{X}-\widehat{\boldsymbol{X}}^{*}\hspace{-0.5pt}\hspace{-0.5pt}\hspace{-0.5pt}(\boldsymbol{Y})\Vert_{2}^{2}|\boldsymbol{Y}\}$. It is then clear that achieving small error variability is at least as desirable as achieving minimal errors on average.

Motivated by the previous discussion, we consider a nontrivial variation
of the risk-neutral MMSE problem (\ref{eq:MMSE_Problem}), striking
a \textit{balance between mean performance and risk}. Specifically,
we introduce and study the \textit{constrained} stochastic variational
problem 
\begin{equation}
\begin{array}{rl}
\underset{\widehat{\boldsymbol{X}}:\Omega\rightarrow\mathbb{R}^{M}}{\mathrm{minimize}} & \mathbb{E}\{\Vert\boldsymbol{X}-\widehat{\boldsymbol{X}}\Vert_{2}^{2}\}\\
\mathrm{subject\,to} & \mathbb{E}\{\mathbb{V}_{\boldsymbol{Y}}\{\Vert\boldsymbol{X}-\widehat{\boldsymbol{X}}\Vert_{2}^{2}\}\hspace{-1pt}\}\le\varepsilon\\
 & \widehat{\boldsymbol{X}}\text{ is }\mathscr{Y}\text{-measurable}
\end{array},\label{eq:Base_Problem-V}
\end{equation}
where 
\begin{equation}
\mathbb{V}_{\boldsymbol{Y}}\{\Vert\boldsymbol{X}-\widehat{\boldsymbol{X}}\Vert_{2}^{2}\}\hspace{-1pt}\hspace{-0.5pt}\triangleq\hspace{-0.5pt}\hspace{-1pt}\mathbb{E}\big\{\hspace{-0.5pt}\hspace{-1pt}\big(\hspace{-0.5pt}\Vert\boldsymbol{X}-\widehat{\boldsymbol{X}}\Vert_{2}^{2}-\mathbb{E}\{\Vert\boldsymbol{X}-\widehat{\boldsymbol{X}}\Vert_{2}^{2}|\boldsymbol{Y}\}\big)^{2}|\boldsymbol{Y}\hspace{-1pt}\big\}
\end{equation}
is the predictive variance of $\Vert\boldsymbol{X}-\widehat{\boldsymbol{X}}\Vert_{2}^{2}$
relative to $\boldsymbol{Y}$, and $\varepsilon>0$ is a fixed risk
tolerance. In words, problem (\ref{eq:Base_Problem-V}) constrains
the \textit{expected predictive variance of the quadratic cost} $\Vert\boldsymbol{X}-\widehat{\boldsymbol{X}}\Vert_{2}^{2}$,
known in the statistics literature as the \textit{unexplained component}
of its variance; the latter is due to the law of total variance. In
other words, the constraint quantifies the uncertainty of MMSE-optimally
predicting the quadratic cost \textit{achieved} \textit{by choosing
an estimator }$\widehat{\boldsymbol{X}}(\boldsymbol{Y})$, on the
basis of the observations $\boldsymbol{Y}$. Of course, $\mathbb{E}\{\mathbb{V}_{\boldsymbol{Y}}\{\Vert\boldsymbol{X}-\widehat{\boldsymbol{X}}\Vert_{2}^{2}\}\hspace{-1pt}\}$
is a \textit{measure of risk}. Therefore, we suggestively refer to
the task fulfilled by problem (\ref{eq:Base_Problem-V}) as \textit{risk-aware
MMSE estimation}.

Problem (\ref{eq:Base_Problem-V}) confines the search for an optimal
MMSE estimator within the family of estimators exhibiting risk (in
the sense described above) within tolerance $\varepsilon$; thus,
problem (\ref{eq:Base_Problem-V}) is well-motivated. Naturally, an
optimal solution to the risk-aware problem (\ref{eq:Base_Problem-V})
in general achieves larger MSE as compared to the risk-neutral problem
(\ref{eq:MMSE_Problem}). However, the statistical variability of
the squared errors achieved by the former will be explicitly controlled,
according to the tunable tolerance $\varepsilon$, resulting in more
stable statistical prediction.

\section{Convex Variational QCQP Reformulation }

As it turns out, in such a general form, the risk-aware MMSE problem
(\ref{eq:Base_Problem-V}) is rather challenging to study, let alone
solve. Therefore, in the following, we will consider a slightly more
constrained version of (\ref{eq:Base_Problem-V}), by enforcing square
integrability on the decision $\widehat{\boldsymbol{X}}$, namely,
\begin{equation}
\begin{array}{rl}
\underset{\widehat{\boldsymbol{X}}:\Omega\rightarrow\mathbb{R}^{M}}{\mathrm{minimize}} & \mathbb{E}\{\Vert\boldsymbol{X}-\widehat{\boldsymbol{X}}\Vert_{2}^{2}\}\\
\mathrm{subject\,to} & \mathbb{E}\{\mathbb{V}_{\boldsymbol{Y}}\{\Vert\boldsymbol{X}-\widehat{\boldsymbol{X}}\Vert_{2}^{2}\}\hspace{-1pt}\}\le\varepsilon\\
 & \widehat{\boldsymbol{X}}\in{\cal L}_{2|\mathscr{Y}}^{M}
\end{array}.\label{eq:Base_Problem-L2}
\end{equation}
Of course, the additional ${\cal L}_{2}$ constraint in problem (\ref{eq:Base_Problem-L2})
may not be in favor of generality, per se, but it is harmless for
almost every practical consideration. Further, in the following we
make use of the following regularity condition on the statistical
behavior of $(\boldsymbol{X},\boldsymbol{Y})$.

\setlist[description]{style=multiline,leftmargin=20bp}

\noindent \begin{assumption}\label{AssumptionMain}It is true that
$\mathbb{E}\big\{\hspace{-1pt}\Vert\boldsymbol{X}\Vert_{2}^{3}|\boldsymbol{Y}\hspace{-1pt}\big\}\in{\cal L}_{2|\mathscr{Y}}^{1}$.\end{assumption}

In words, Assumption \ref{AssumptionMain} simply says that the third-order
moment filter $\mathbb{E}\big\{\hspace{-1pt}\Vert\boldsymbol{X}\Vert_{2}^{3}|\boldsymbol{Y}\hspace{-1pt}\big\}$
is of finite energy. Using Assumption \ref{AssumptionMain}, problem
(\ref{eq:Base_Problem-L2}) may be conveniently reformulated, as the
next result suggests. 
\begin{lem}
\textbf{\textup{(QCQP Reformulation of Problem (\ref{eq:Base_Problem-L2}))}}\label{lem:QCQP}
Suppose that Assumption \ref{AssumptionMain} is in effect, and define
the posterior covariance 
\begin{equation}
\boldsymbol{\Sigma}_{\boldsymbol{X}|\boldsymbol{Y}}\triangleq\mathbb{E}\big\{\hspace{-1pt}(\boldsymbol{X}-\mathbb{E}\{\boldsymbol{X}|\boldsymbol{Y}\})(\boldsymbol{X}-\mathbb{E}\{\boldsymbol{X}|\boldsymbol{Y}\})^{\boldsymbol{T}}|\boldsymbol{Y}\big\}\succeq{\bf 0}.\hspace{-1pt}
\end{equation}
Then, problem (\ref{eq:Base_Problem-L2}) is well-defined and equivalent
to the convex variational Quadratically Constrained Quadratic Program
(QCQP) 
\begin{equation}
\begin{array}{rl}
\hspace{-1pt}\hspace{-1pt}\hspace{-1pt}\hspace{-1pt}\hspace{-1pt}\hspace{-1pt}\hspace{-1pt}\hspace{-1pt}\hspace{-1pt}\hspace{-1pt}\underset{\widehat{\boldsymbol{X}}:\Omega\rightarrow\mathbb{R}^{M}}{\mathrm{minimize}} & \hspace{-1pt}\hspace{-1pt}\hspace{-1pt}\hspace{-1pt}\dfrac{1}{2}\mathbb{E}\big\{\Vert\widehat{\boldsymbol{X}}\Vert_{2}^{2}\hspace{-1pt}\hspace{-1pt}-\hspace{-0.5pt}\hspace{-1pt}2(\mathbb{E}\big\{\hspace{-1pt}\boldsymbol{X}|\boldsymbol{Y}\big\})^{\boldsymbol{T}}\widehat{\boldsymbol{X}}\hspace{-1pt}+\hspace{-1pt}\hspace{-1pt}\mathbb{E}\big\{\hspace{-1pt}\Vert\boldsymbol{X}\Vert_{2}^{2}|\boldsymbol{Y}\big\}\hspace{-1pt}\hspace{-0.5pt}\big\}\\
\hspace{-1pt}\hspace{-1pt}\hspace{-1pt}\hspace{-1pt}\hspace{-1pt}\hspace{-1pt}\hspace{-1pt}\hspace{-1pt}\hspace{-1pt}\hspace{-1pt}\mathrm{subject\,to} & \hspace{-1pt}\hspace{-1pt}\hspace{-1pt}\hspace{-1pt}\mathbb{E}\big\{\widehat{\boldsymbol{X}}^{\boldsymbol{T}}\boldsymbol{\Sigma}_{\boldsymbol{X}|\boldsymbol{Y}}\widehat{\boldsymbol{X}}-\big(\mathbb{E}\big\{\hspace{-1pt}\Vert\boldsymbol{X}\Vert_{2}^{2}\boldsymbol{X}|\boldsymbol{Y}\hspace{-1pt}\big\}\\
 & \hspace{-1pt}\hspace{-1pt}\hspace{-1pt}\hspace{-1pt}\quad-\mathbb{E}\big\{\hspace{-1pt}\Vert\boldsymbol{X}\Vert_{2}^{2}|\boldsymbol{Y}\hspace{-1pt}\big\}\mathbb{E}\{\boldsymbol{X}|\boldsymbol{Y}\}\big)^{\boldsymbol{T}}\widehat{\boldsymbol{X}}\big\}\\
 & \hspace{-1pt}\hspace{-1pt}\hspace{-1pt}\hspace{-1pt}\quad\quad\le\hspace{-1pt}\hspace{-1pt}\dfrac{\varepsilon-\mathbb{E}\big\{\hspace{-1pt}\mathbb{V}_{\boldsymbol{Y}}\{\Vert\boldsymbol{X}\Vert_{2}^{2}\}\hspace{-1pt}\hspace{-1pt}\big\}}{4}\\
 & \hspace{-1pt}\hspace{-1pt}\hspace{-1pt}\hspace{-1pt}\widehat{\boldsymbol{X}}\in{\cal L}_{2|\mathscr{Y}}^{M}
\end{array}\hspace{-1pt}\hspace{-1pt}\hspace{-1pt}\hspace{-1pt}\hspace{-1pt},\hspace{-1pt}\hspace{-1pt}\hspace{-1pt}\hspace{-1pt}\label{eq:QCQP}
\end{equation}
where all expectations and involved operations are well-defined. 
\end{lem}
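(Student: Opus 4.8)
The plan is to reduce the objective and the constraint separately by purely algebraic manipulations, certify each resulting term through Assumption~\ref{AssumptionMain}, and close with a convexity check. For the objective, I would expand $\Vert\boldsymbol{X}-\widehat{\boldsymbol{X}}\Vert_{2}^{2}=\Vert\boldsymbol{X}\Vert_{2}^{2}-2\boldsymbol{X}^{\boldsymbol{T}}\widehat{\boldsymbol{X}}+\Vert\widehat{\boldsymbol{X}}\Vert_{2}^{2}$ and invoke the $\mathscr{Y}$-measurability of $\widehat{\boldsymbol{X}}$ together with the tower property: since $\widehat{\boldsymbol{X}}$ pulls out of the conditional expectation, $\mathbb{E}\{\boldsymbol{X}^{\boldsymbol{T}}\widehat{\boldsymbol{X}}\}=\mathbb{E}\{(\mathbb{E}\{\boldsymbol{X}|\boldsymbol{Y}\})^{\boldsymbol{T}}\widehat{\boldsymbol{X}}\}$ and $\mathbb{E}\{\Vert\boldsymbol{X}\Vert_{2}^{2}\}=\mathbb{E}\{\mathbb{E}\{\Vert\boldsymbol{X}\Vert_{2}^{2}|\boldsymbol{Y}\}\}$. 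Multiplying by the harmless positive factor $\frac{1}{2}$, which leaves the minimizer unchanged, recovers exactly the QCQP objective. Each term is finite under Assumption~\ref{AssumptionMain}: conditional Jensen gives $\mathbb{E}\{\Vert\boldsymbol{X}\Vert_{2}^{p}|\boldsymbol{Y}\}\le(\mathbb{E}\{\Vert\boldsymbol{X}\Vert_{2}^{3}|\boldsymbol{Y}\})^{p/3}$ for $p\le3$, so both $\mathbb{E}\{\boldsymbol{X}|\boldsymbol{Y}\}$ and $\mathbb{E}\{\Vert\boldsymbol{X}\Vert_{2}^{2}|\boldsymbol{Y}\}$ lie in ${\cal L}_{2|\mathscr{Y}}$ (using ${\cal L}_{2}\subseteq{\cal L}_{4/3}$ on a probability space), whence the cross term is integrable by Cauchy--Schwarz.

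The crux is the constraint. Fixing $\widehat{\boldsymbol{X}}\in{\cal L}_{2|\mathscr{Y}}^{M}$ and working conditionally on $\boldsymbol{Y}$, the additive $\mathscr{Y}$-measurable summand $\Vert\widehat{\boldsymbol{X}}\Vert_{2}^{2}$ contributes nothing to $\mathbb{V}_{\boldsymbol{Y}}$, so $\mathbb{V}_{\boldsymbol{Y}}\{\Vert\boldsymbol{X}-\widehat{\boldsymbol{X}}\Vert_{2}^{2}\}=\mathbb{V}_{\boldsymbol{Y}}\{\Vert\boldsymbol{X}\Vert_{2}^{2}-2\widehat{\boldsymbol{X}}^{\boldsymbol{T}}\boldsymbol{X}\}$; expanding this bilinearly, while treating $\widehat{\boldsymbol{X}}$ as constant under the conditioning, yields the pointwise (a.s.) identity
\begin{equation}
\mathbb{V}_{\boldsymbol{Y}}\{\Vert\boldsymbol{X}-\widehat{\boldsymbol{X}}\Vert_{2}^{2}\}=\mathbb{V}_{\boldsymbol{Y}}\{\Vert\boldsymbol{X}\Vert_{2}^{2}\}+4\,\widehat{\boldsymbol{X}}^{\boldsymbol{T}}\boldsymbol{\Sigma}_{\boldsymbol{X}|\boldsymbol{Y}}\widehat{\boldsymbol{X}}-4\,\boldsymbol{w}^{\boldsymbol{T}}\widehat{\boldsymbol{X}},
\end{equation}
where $\widehat{\boldsymbol{X}}^{\boldsymbol{T}}\boldsymbol{\Sigma}_{\boldsymbol{X}|\boldsymbol{Y}}\widehat{\boldsymbol{X}}=\mathbb{V}_{\boldsymbol{Y}}\{\widehat{\boldsymbol{X}}^{\boldsymbol{T}}\boldsymbol{X}\}$ and $\boldsymbol{w}\triangleq\mathbb{E}\{\Vert\boldsymbol{X}\Vert_{2}^{2}\boldsymbol{X}|\boldsymbol{Y}\}-\mathbb{E}\{\Vert\boldsymbol{X}\Vert_{2}^{2}|\boldsymbol{Y}\}\mathbb{E}\{\boldsymbol{X}|\boldsymbol{Y}\}$ is the conditional covariance of $\Vert\boldsymbol{X}\Vert_{2}^{2}$ with $\boldsymbol{X}$. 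The decisive observation is that the only object demanding a fourth moment, namely $\mathbb{V}_{\boldsymbol{Y}}\{\Vert\boldsymbol{X}\Vert_{2}^{2}\}$ (which carries $\mathbb{E}\{\Vert\boldsymbol{X}\Vert_{2}^{4}|\boldsymbol{Y}\}$ and may be non-integrable), is \emph{decision-free}; taking expectations, moving it to the right-hand side, and dividing by $4$ reproduces exactly the QCQP constraint. Since the two constraints then differ only by adding the same, decision-independent (possibly infinite) constant to both sides, they cut out the same feasible set, the equivalence being read off in $[0,\infty]$.

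It remains to certify that the decision-dependent pieces are well-defined under Assumption~\ref{AssumptionMain}, and this is the main obstacle, since nothing stronger than $\widehat{\boldsymbol{X}}\in{\cal L}_{2}$ and a third-moment filter is available. The linear term is controlled by $\Vert\boldsymbol{w}\Vert_{2}\le2\,\mathbb{E}\{\Vert\boldsymbol{X}\Vert_{2}^{3}|\boldsymbol{Y}\}$, obtained from $\Vert\mathbb{E}\{\Vert\boldsymbol{X}\Vert_{2}^{2}\boldsymbol{X}|\boldsymbol{Y}\}\Vert_{2}\le\mathbb{E}\{\Vert\boldsymbol{X}\Vert_{2}^{3}|\boldsymbol{Y}\}$ and from conditional Jensen applied to the product $\mathbb{E}\{\Vert\boldsymbol{X}\Vert_{2}^{2}|\boldsymbol{Y}\}\,\Vert\mathbb{E}\{\boldsymbol{X}|\boldsymbol{Y}\}\Vert_{2}$; hence $\boldsymbol{w}\in{\cal L}_{2|\mathscr{Y}}^{M}$ by Assumption~\ref{AssumptionMain} and $\mathbb{E}\{|\boldsymbol{w}^{\boldsymbol{T}}\widehat{\boldsymbol{X}}|\}<\infty$ by Cauchy--Schwarz. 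The quadratic term $\mathbb{E}\{\widehat{\boldsymbol{X}}^{\boldsymbol{T}}\boldsymbol{\Sigma}_{\boldsymbol{X}|\boldsymbol{Y}}\widehat{\boldsymbol{X}}\}=\mathbb{E}\{\mathbb{V}_{\boldsymbol{Y}}\{\widehat{\boldsymbol{X}}^{\boldsymbol{T}}\boldsymbol{X}\}\}$ is nonnegative, hence well-defined in $[0,\infty]$, which is exactly consistent with an extended-valued convex constraint. Finally, convexity is immediate: the objective equals, up to a linear term and an additive constant, the convex quadratic $\frac{1}{2}\mathbb{E}\{\Vert\widehat{\boldsymbol{X}}\Vert_{2}^{2}\}$, while $\widehat{\boldsymbol{X}}\mapsto\widehat{\boldsymbol{X}}^{\boldsymbol{T}}\boldsymbol{\Sigma}_{\boldsymbol{X}|\boldsymbol{Y}}\widehat{\boldsymbol{X}}$ is convex pointwise because $\boldsymbol{\Sigma}_{\boldsymbol{X}|\boldsymbol{Y}}\succeq\boldsymbol{0}$, and both convexity and the linear perturbation survive the expectation, so the reformulation is a genuine convex variational QCQP.
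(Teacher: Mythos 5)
Your argument is correct and follows essentially the same route as the paper's proof: expand the objective via the tower property, expand the conditional variance of the squared error so that the decision-free term $\mathbb{V}_{\boldsymbol{Y}}\{\Vert\boldsymbol{X}\Vert_{2}^{2}\}$ is isolated on one side, and certify integrability of the decision-dependent terms through conditional Jensen and Cauchy--Schwarz under Assumption \ref{AssumptionMain}. The only differences are organizational: you carry out the constraint expansion through the bilinearity of the conditional variance and bound the single covariance vector $\boldsymbol{w}$ by $2\,\mathbb{E}\{\Vert\boldsymbol{X}\Vert_{2}^{3}|\boldsymbol{Y}\}$ in one stroke, whereas the paper expands the square termwise and bounds four dot-product terms separately before taking conditional expectations.
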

\begin{proof}[Proof of Lemma \ref{lem:QCQP}]
We start with the objective of problem (\ref{eq:Base_Problem-L2}),
for which it is obviously true that
\begin{equation}
\mathbb{E}\{\Vert\boldsymbol{X}-\widehat{\boldsymbol{X}}\Vert_{2}^{2}\}\equiv\mathbb{E}\big\{\mathbb{E}\big\{\Vert\boldsymbol{X}\Vert_{2}^{2}-2\boldsymbol{X}^{\boldsymbol{T}}\widehat{\boldsymbol{X}}+\Vert\widehat{\boldsymbol{X}}\Vert_{2}^{2}|\boldsymbol{Y}\big\}\hspace{-1pt}\hspace{-0.5pt}\big\},
\end{equation}
since the expectation of $\Vert\boldsymbol{X}-\widehat{\boldsymbol{X}}\Vert_{2}^{2}$
always exists. Additionally, by invoking Cauchy-Schwarz twice, we
observe that
\begin{flalign}
\mathbb{E}\big\{|\boldsymbol{X}^{\boldsymbol{T}}\widehat{\boldsymbol{X}}|\big\} & \le\mathbb{E}\big\{\Vert\boldsymbol{X}\Vert_{2}\Vert\widehat{\boldsymbol{X}}\Vert_{2}\big\}\nonumber \\
 & \equiv\mathbb{E}\big\{\mathbb{E}\big\{\Vert\boldsymbol{X}\Vert_{2}|\boldsymbol{Y}\big\}\Vert\widehat{\boldsymbol{X}}\Vert_{2}\hspace{-1pt}\hspace{-0.5pt}\big\}\nonumber \\
 & \le\big\Vert\mathbb{E}\big\{\Vert\boldsymbol{X}\Vert_{2}|\boldsymbol{Y}\big\}\big\Vert_{{\cal L}_{2}}\big\Vert\Vert\widehat{\boldsymbol{X}}\Vert_{2}\big\Vert_{{\cal L}_{2}},\label{eq:Key_Bound}
\end{flalign}
where $\big\Vert\Vert\widehat{\boldsymbol{X}}\Vert_{2}\big\Vert_{{\cal L}_{2}}<\infty\iff\widehat{\boldsymbol{X}}\in{\cal L}_{2|\mathscr{Y}}^{M}$
by assumption, and Jensen implies that
\begin{align}
\big\Vert\mathbb{E}\big\{\Vert\boldsymbol{X}\Vert_{2}|\boldsymbol{Y}\big\}\big\Vert_{{\cal L}_{2}} & \le\big\Vert\mathbb{E}\big\{\Vert\boldsymbol{X}\Vert_{2}|\boldsymbol{Y}\big\}\big\Vert_{{\cal L}_{3}}\nonumber \\
 & \le\big(\mathbb{E}\big\{\hspace{-1pt}\hspace{-0.5pt}\big(\mathbb{E}\big\{\Vert\boldsymbol{X}\Vert_{2}^{3}|\boldsymbol{Y}\big\}\big)^{2\cdot1/2}\hspace{-1pt}\hspace{-0.5pt}\big\}\big)^{1/3}\nonumber \\
 & \le\big(\mathbb{E}\big\{\hspace{-1pt}\hspace{-0.5pt}\big(\mathbb{E}\big\{\Vert\boldsymbol{X}\Vert_{2}^{3}|\boldsymbol{Y}\big\}\big)^{2}\hspace{-1pt}\hspace{-0.5pt}\big\}\big)^{1/(2\cdot3)}\nonumber \\
 & \le\big\Vert\mathbb{E}\big\{\hspace{-1pt}\Vert\boldsymbol{X}\Vert_{2}^{3}|\boldsymbol{Y}\hspace{-1pt}\big\}\big\Vert_{{\cal L}_{2}}^{1/3}<\infty,
\end{align}
as well. Then $\mathbb{E}\big\{\boldsymbol{X}^{\boldsymbol{T}}\widehat{\boldsymbol{X}}\big\}$
is finite, and it follows that
\begin{equation}
\mathbb{E}\{\Vert\boldsymbol{X}-\widehat{\boldsymbol{X}}\Vert_{2}^{2}\}\equiv\mathbb{E}\big\{\Vert\widehat{\boldsymbol{X}}\Vert_{2}^{2}\hspace{-1pt}\hspace{-1pt}-\hspace{-0.5pt}\hspace{-1pt}2(\mathbb{E}\big\{\hspace{-1pt}\boldsymbol{X}|\boldsymbol{Y}\big\})^{\boldsymbol{T}}\widehat{\boldsymbol{X}}\hspace{-1pt}+\hspace{-1pt}\hspace{-1pt}\mathbb{E}\big\{\hspace{-1pt}\Vert\boldsymbol{X}\Vert_{2}^{2}|\boldsymbol{Y}\big\}\hspace{-1pt}\hspace{-0.5pt}\big\},
\end{equation}
as in the objective of (\ref{eq:QCQP}).

The constraint of (\ref{eq:Base_Problem-L2}) may be equivalently
reexpressed in a similar fashion, although the procedure is slightly
more involved. Specifically, by definition of $\mathbb{V}_{\boldsymbol{Y}}\{\Vert\boldsymbol{X}-\widehat{\boldsymbol{X}}\Vert_{2}^{2}\}$,
we may initially expand as
\begin{align}
 & \hspace{-1pt}\hspace{-1pt}\hspace{-1pt}\hspace{-1pt}\hspace{-1pt}\hspace{-1pt}\hspace{-1pt}\hspace{-1pt}\hspace{-1pt}\hspace{-1pt}\big(\hspace{-0.5pt}\Vert\boldsymbol{X}-\widehat{\boldsymbol{X}}\Vert_{2}^{2}-\mathbb{E}\{\Vert\boldsymbol{X}-\widehat{\boldsymbol{X}}\Vert_{2}^{2}|\boldsymbol{Y}\}\big)^{2}\nonumber \\
 & \equiv(\Vert\boldsymbol{X}\Vert_{2}^{2}-\mathbb{E}\{\Vert\boldsymbol{X}\Vert_{2}^{2}|\boldsymbol{Y}\})^{2}+4\widehat{\boldsymbol{X}}^{\boldsymbol{T}}(\boldsymbol{X}-\mathbb{E}\{\boldsymbol{X}|\boldsymbol{Y}\})(\boldsymbol{X}-\mathbb{E}\{\boldsymbol{X}|\boldsymbol{Y}\})^{\boldsymbol{T}}\widehat{\boldsymbol{X}}\nonumber \\
 & \quad-4\Vert\boldsymbol{X}\Vert_{2}^{2}\boldsymbol{X}^{\boldsymbol{T}}\widehat{\boldsymbol{X}}+4\Vert\boldsymbol{X}\Vert_{2}^{2}(\mathbb{E}\{\boldsymbol{X}|\boldsymbol{Y}\})^{\boldsymbol{T}}\widehat{\boldsymbol{X}}\nonumber \\
 & \quad\quad+4\mathbb{E}\{\Vert\boldsymbol{X}\Vert_{2}^{2}|\boldsymbol{Y}\}\boldsymbol{X}^{\boldsymbol{T}}\widehat{\boldsymbol{X}}-4\mathbb{E}\{\Vert\boldsymbol{X}\Vert_{2}^{2}|\boldsymbol{Y}\}(\mathbb{E}\{\boldsymbol{X}|\boldsymbol{Y}\})^{\boldsymbol{T}}\widehat{\boldsymbol{X}},\label{eq:expansion_1}
\end{align}
where the first two terms of the right-hand side of (\ref{eq:expansion_1})
are nonnegative. Consequently, it suffices to concentrate on the respective
last four dot product terms.

Using the same argument as in (\ref{eq:Key_Bound}), in order to show
that all these four terms have finite expectations, it suffices to
ensure that
\begin{equation}
\big\Vert\mathbb{E}\{\Vert\Vert\boldsymbol{X}\Vert_{2}^{2}\boldsymbol{X}\Vert_{2}|\boldsymbol{Y}\}\big\Vert_{{\cal L}_{2}}\equiv\big\Vert\mathbb{E}\{\Vert\boldsymbol{X}\Vert_{2}^{3}|\boldsymbol{Y}\}\big\Vert_{{\cal L}_{2}}<\infty,
\end{equation}
which is of course automatically true by Assumption \ref{AssumptionMain},
but also that
\begin{flalign}
\big\Vert\mathbb{E}\{\Vert\Vert\boldsymbol{X}\Vert_{2}^{2}\mathbb{E}\{\boldsymbol{X}|\boldsymbol{Y}\}\Vert_{2}|\boldsymbol{Y}\}\big\Vert_{{\cal L}_{2}} & \equiv\big\Vert\mathbb{E}\{\Vert\boldsymbol{X}\Vert_{2}^{2}\Vert\mathbb{E}\{\boldsymbol{X}|\boldsymbol{Y}\}\Vert_{2}|\boldsymbol{Y}\}\big\Vert_{{\cal L}_{2}}\nonumber \\
 & \equiv\big\Vert\Vert\mathbb{E}\{\boldsymbol{X}|\boldsymbol{Y}\}\Vert_{2}\mathbb{E}\{\Vert\boldsymbol{X}\Vert_{2}^{2}|\boldsymbol{Y}\}\big\Vert_{{\cal L}_{2}}\nonumber \\
 & <\infty,\\
\big\Vert\mathbb{E}\{\Vert\mathbb{E}\{\Vert\boldsymbol{X}\Vert_{2}^{2}|\boldsymbol{Y}\}\boldsymbol{X}\Vert_{2}|\boldsymbol{Y}\}\big\Vert_{{\cal L}_{2}} & \equiv\big\Vert\mathbb{E}\{\Vert\boldsymbol{X}\Vert_{2}|\boldsymbol{Y}\}\mathbb{E}\{\Vert\boldsymbol{X}\Vert_{2}^{2}|\boldsymbol{Y}\}\big\Vert_{{\cal L}_{2}}\nonumber \\
 & <\infty\quad\text{and}\\
\big\Vert\mathbb{E}\big\{\Vert\mathbb{E}\{\Vert\boldsymbol{X}\Vert_{2}^{2}|\boldsymbol{Y}\big\}\mathbb{E}\{\boldsymbol{X}|\boldsymbol{Y}\}\Vert_{2}|\boldsymbol{Y}\}\big\Vert_{{\cal L}_{2}} & \equiv\big\Vert\Vert\mathbb{E}\{\boldsymbol{X}|\boldsymbol{Y}\}\Vert_{2}\mathbb{E}\{\Vert\boldsymbol{X}\Vert_{2}^{2}|\boldsymbol{Y}\}\big\Vert_{{\cal L}_{2}}\\
 & <\infty.
\end{flalign}
Observe, though, that all three latter quantities are upper bounded
by the quantity $\big\Vert\mathbb{E}\{\Vert\boldsymbol{X}\Vert_{2}|\boldsymbol{Y}\}\mathbb{E}\{\Vert\boldsymbol{X}\Vert_{2}^{2}|\boldsymbol{Y}\}\big\Vert_{{\cal L}_{2}}$,
for which we may write (by Jensen) 
\begin{flalign}
\big\Vert\mathbb{E}\{\Vert\boldsymbol{X}\Vert_{2}|\boldsymbol{Y}\}\mathbb{E}\{\Vert\boldsymbol{X}\Vert_{2}^{2}|\boldsymbol{Y}\}\big\Vert_{{\cal L}_{2}}^{2} & \equiv\mathbb{E}\big\{(\mathbb{E}\{\Vert\boldsymbol{X}\Vert_{2}|\boldsymbol{Y}\})^{2}(\mathbb{E}\{\Vert\boldsymbol{X}\Vert_{2}^{2}|\boldsymbol{Y}\})^{2}\big\}\nonumber \\
 & \le\mathbb{E}\big\{\mathbb{E}\{\Vert\boldsymbol{X}\Vert_{2}^{2}|\boldsymbol{Y}\}(\mathbb{E}\{\Vert\boldsymbol{X}\Vert_{2}^{2}|\boldsymbol{Y}\})^{2}\big\}\nonumber \\
 & \equiv\mathbb{E}\big\{(\mathbb{E}\{\Vert\boldsymbol{X}\Vert_{2}^{2}|\boldsymbol{Y}\})^{3}\big\}\nonumber \\
 & \equiv\mathbb{E}\big\{(\mathbb{E}\{\Vert\boldsymbol{X}\Vert_{2}^{2}|\boldsymbol{Y}\})^{2\cdot3/2}\big\}\nonumber \\
 & \le\mathbb{E}\big\{(\mathbb{E}\{\Vert\boldsymbol{X}\Vert_{2}^{3}|\boldsymbol{Y}\})^{2}\big\}<\infty,
\end{flalign}
where the last line follows again by Assumption \ref{AssumptionMain}.

Given the discussion above, we may now take conditional expectations
on (\ref{eq:expansion_1}), to obtain the expression (note that all
operations involving conditional expectations are technically allowed
under our assumptions)
\begin{align}
 & \hspace{-1pt}\hspace{-1pt}\hspace{-1pt}\hspace{-1pt}\hspace{-1pt}\hspace{-1pt}\hspace{-1pt}\hspace{-1pt}\hspace{-1pt}\hspace{-1pt}\mathbb{V}_{\boldsymbol{Y}}\{\Vert\boldsymbol{X}-\widehat{\boldsymbol{X}}\Vert_{2}^{2}\}\nonumber \\
 & \equiv\mathbb{E}\big\{\hspace{-0.5pt}\hspace{-0.5pt}\hspace{-0.5pt}\big(\hspace{-0.5pt}\Vert\boldsymbol{X}-\widehat{\boldsymbol{X}}\Vert_{2}^{2}-\mathbb{E}\{\Vert\boldsymbol{X}-\widehat{\boldsymbol{X}}\Vert_{2}^{2}|\boldsymbol{Y}\}\big)^{2}|\boldsymbol{Y}\big\}\nonumber \\
 & \equiv\mathbb{V}_{\boldsymbol{Y}}\{\Vert\boldsymbol{X}\Vert_{2}^{2}\}+4\widehat{\boldsymbol{X}}^{\boldsymbol{T}}\boldsymbol{\Sigma}_{\boldsymbol{X}|\boldsymbol{Y}}\widehat{\boldsymbol{X}}\nonumber \\
 & \quad-4\big(\mathbb{E}\{\Vert\boldsymbol{X}\Vert_{2}^{2}\boldsymbol{X}|\boldsymbol{Y}\}-\mathbb{E}\{\Vert\boldsymbol{X}\Vert_{2}^{2}|\boldsymbol{Y}\}\mathbb{E}\{\boldsymbol{X}|\boldsymbol{Y}\}\big)^{\boldsymbol{T}}\widehat{\boldsymbol{X}}.\label{eq:E_expansion_1}
\end{align}
Taking expectations on both sides of (\ref{eq:E_expansion_1}) and
rearranging terms gives the desired expression for the constraint
of the QCQP (\ref{eq:QCQP}).
\end{proof}
Lemma \ref{lem:QCQP} is very useful, because it shows the equivalence
of problem (\ref{eq:Base_Problem-L2}) to the convex QCQP (\ref{eq:QCQP}),
which is well-defined and favorably structured. In particular, this
reformulation will allow us to effectively study problem (\ref{eq:Base_Problem-L2})
by looking at its variational Lagrangian dual. Actually, as we discuss
next, working in the dual domain will allow us to \textit{solve} problem
(\ref{eq:Base_Problem-L2}) in \textit{closed-form}. Of course, such
a closed form is important, not only because it provides an analytical,
textbook-level solution to a functional risk-aware problem, which
happens rather infrequently in such settings, but also because, as
we will see, the solution itself provides intuition, highlights connections
and enables comparison of problem (\ref{eq:Base_Problem-L2}) with
its risk-neutral counterpart (\ref{eq:MMSE_Problem}).

\section{Risk-Aware MMSE Estimators}

In our development, we exploit a variational version of Slater's condition,
which is one the most widely used constraint qualifications in both
deterministic and stochastic optimization.

\begin{assumption}\label{AssumptionMain2}Given $\varepsilon>0$,
problem (\ref{eq:Base_Problem-L2}) satisfies Slater's condition,
i.e., there exists $\widehat{\boldsymbol{X}}_{\dagger}\in{\cal L}_{2|\mathscr{Y}}^{M}$,
such that $\mathbb{E}\{\Vert\boldsymbol{X}-\widehat{\boldsymbol{X}}_{\dagger}\Vert_{2}^{2}\}<\infty$
and $\mathbb{E}\{\mathbb{V}_{\boldsymbol{Y}}\{\Vert\boldsymbol{X}-\widehat{\boldsymbol{X}}_{\dagger}\Vert_{2}^{2}\}\hspace{-1pt}\}<\varepsilon$.\end{assumption}

Under both Assumptions \ref{AssumptionMain} and \ref{AssumptionMain2},
it follows that the QCQP (\ref{eq:QCQP}) satisfies Slater's condition,
as well. Then, it must be the case that $\mathbb{E}\{\mathbb{V}_{\boldsymbol{Y}}\{\Vert\boldsymbol{X}\Vert_{2}^{2}\}\hspace{-1pt}\}\hspace{-1pt}\hspace{-1.5pt}<\hspace{-0.5pt}\hspace{-1pt}\infty$;
if not, Assumption \ref{AssumptionMain2} is impossible to hold. Further,
problem (\ref{eq:QCQP}) must be feasible, with convex effective domain\vspace{-1bp}
\begin{equation}
{\cal F}_{2|\mathscr{Y}}^{M}\hspace{-1pt}\hspace{-1pt}\hspace{-1pt}\triangleq\hspace{-1pt}\hspace{-1pt}\big\{\widehat{\boldsymbol{X}}\in{\cal L}_{2|\mathscr{Y}}^{M}\big|\mathbb{E}\big\{\widehat{\boldsymbol{X}}^{\boldsymbol{T}}\boldsymbol{\Sigma}_{\boldsymbol{X}|\boldsymbol{Y}}\widehat{\boldsymbol{X}}\big\}<\infty\big\}.\hspace{-1pt}\hspace{-1pt}
\end{equation}

Next, if Assumption \ref{AssumptionMain} holds, define the \textit{variational
Lagrangian }of the \textit{primal problem} (\ref{eq:QCQP}) $\mathpzc{L}\hspace{-0.5pt}\hspace{-0.5pt}\hspace{-0.5pt}:\hspace{-0.5pt}\hspace{-0.5pt}\hspace{-0.5pt}{\cal L}_{2|\mathscr{Y}}^{M}$
$\times\mathbb{R}_{+}\hspace{-0.5pt}\hspace{-0.5pt}\hspace{-0.5pt}\hspace{-0.5pt}\rightarrow\hspace{-0.5pt}\hspace{-0.5pt}\hspace{-0.5pt}(-\infty,\infty]$
as
\begin{flalign}
\mathpzc{L}\big(\widehat{\boldsymbol{X}},\mu\big) & \triangleq\dfrac{1}{2}\mathbb{E}\big\{\Vert\widehat{\boldsymbol{X}}\Vert_{2}^{2}-2(\mathbb{E}\big\{\hspace{-1pt}\boldsymbol{X}|\boldsymbol{Y}\big\})^{\boldsymbol{T}}\widehat{\boldsymbol{X}}\hspace{-1pt}+\hspace{-1pt}\hspace{-1pt}\mathbb{E}\big\{\hspace{-1pt}\Vert\boldsymbol{X}\Vert_{2}^{2}|\boldsymbol{Y}\big\}\hspace{-1pt}\hspace{-0.5pt}\big\}\nonumber \\
 & \quad+\mu\mathbb{E}\big\{\widehat{\boldsymbol{X}}^{\boldsymbol{T}}\boldsymbol{\Sigma}_{\boldsymbol{X}|\boldsymbol{Y}}\widehat{\boldsymbol{X}}-\big(\mathbb{E}\big\{\hspace{-1pt}\Vert\boldsymbol{X}\Vert_{2}^{2}\boldsymbol{X}|\boldsymbol{Y}\hspace{-1pt}\big\}\nonumber \\
 & \quad\quad-\mathbb{E}\big\{\hspace{-1pt}\Vert\boldsymbol{X}\Vert_{2}^{2}|\boldsymbol{Y}\hspace{-1pt}\big\}\mathbb{E}\{\boldsymbol{X}|\boldsymbol{Y}\}\big)^{\boldsymbol{T}}\widehat{\boldsymbol{X}}\big\}\nonumber \\
 & \quad\quad\quad-\mu\dfrac{\varepsilon-\mathbb{E}\big\{\hspace{-1pt}\mathbb{V}_{\boldsymbol{Y}}\{\Vert\boldsymbol{X}\Vert_{2}^{2}\}\hspace{-1pt}\hspace{-1pt}\big\}}{4},
\end{flalign}
where $\mu\in\mathbb{R}_{+}$ is a multiplier associated with the
constraint of (\ref{eq:QCQP}). The \textit{dual function} $\mathpzc{D}\hspace{-0.5pt}\hspace{-0.5pt}:\hspace{-0.5pt}\hspace{-0.5pt}\mathbb{R}_{+}\hspace{-0.5pt}\hspace{-0.5pt}\rightarrow\hspace{-0.5pt}\hspace{-0.5pt}\hspace{-0.5pt}(-\infty,\infty]$
is accordingly defined as
\begin{equation}
\mathpzc{D}(\mu)\triangleq\inf_{\widehat{\boldsymbol{X}}\in{\cal F}_{2|\mathscr{Y}}^{M}}\mathpzc{L}\big(\widehat{\boldsymbol{X}},\mu\big).
\end{equation}
If $\mathpzc{P}^{*}\hspace{-1pt}\hspace{-1pt}\in\hspace{-1pt}\hspace{-1pt}[0,\infty]$
denotes the optimal value of problem (\ref{eq:QCQP}), it is true
that $\mathpzc{D}\le\mathpzc{P}^{*}$ on $\mathbb{R}_{+}$. Then,
the optimal value of the always concave, \textit{dual problem}
\begin{equation}
\begin{array}{rl}
\mathrm{maximize} & \mathpzc{D}(\mu)\\
\mathrm{subject\,to} & \mu\ge0
\end{array},\label{eq:DUAL}
\end{equation}
defined as $\mathpzc{D}^{*}\hspace{-1pt}\hspace{-1pt}\hspace{-1pt}\triangleq\hspace{-1pt}\sup_{\mu\ge0}\hspace{-1pt}\hspace{-0.5pt}\mathpzc{D}(\mu)\hspace{-1pt}\hspace{-1pt}\hspace{-0.5pt}\in\hspace{-1pt}\hspace{-1pt}(-\infty,\hspace{-0.5pt}\infty]$,
is the tightest under-estimate of $\mathpzc{P}^{*}$, when knowing
only $\mathpzc{D}$.

Exploiting Assumptions \ref{AssumptionMain} and \ref{AssumptionMain2},
we may now formulate the following fundamental theorem, which establishes
that the convex variational problem (\ref{eq:QCQP}) exhibits zero
duality gap. This essentially follows as an application of standard
results in variational Lagrangian duality; see, for instance, (\cite{Luenberger1968},
Section 8.3, Theorem 1). The proof is therefore straightforward, and
omitted.
\begin{thm}
\textbf{\textup{(QCQP (\ref{eq:QCQP}): Zero Duality Gap)}}\label{thm:ZDG_Var}
Suppose that Assumptions \ref{AssumptionMain} and \ref{AssumptionMain2}
are in effect. Then, strong duality holds for problem (\ref{eq:QCQP}),
that is, $0\hspace{-1pt}\le\mathpzc{D}^{*}\equiv\mathpzc{P}^{*}<\infty$.
Additionally, the set of dual optimal solutions, $\arg\max_{\mu\ge0}\mathpzc{D}(\mu)$,
is nonempty. Further, if $\widehat{\boldsymbol{X}}_{*}$ is primal
optimal for (\ref{eq:QCQP}), it follows that $\widehat{\boldsymbol{X}}_{*}\equiv\widehat{\boldsymbol{X}}_{*}(\mu_{*})\in\arg\min_{\widehat{\boldsymbol{X}}\in{\cal F}_{2|\mathscr{Y}}^{M}}\mathpzc{L}\big(\widehat{\boldsymbol{X}},\mu_{*}\big)$,
where $0\le\mu_{*}\in\arg\max_{\mu\ge0}\mathpzc{D}(\mu)$.
\end{thm}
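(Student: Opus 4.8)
The plan is to recognize problem (\ref{eq:QCQP}) as an instance of abstract convex programming in a Hilbert space with a single scalar inequality constraint, and to invoke the strong-duality theorem of (\cite{Luenberger1968}, Section 8.3, Theorem 1) directly. Concretely, I would take the underlying vector space to be ${\cal L}_{2|\mathscr{Y}}^{M}$, the convex constraint set to be the effective domain ${\cal F}_{2|\mathscr{Y}}^{M}$, the constraint to map into $Z=\mathbb{R}$, and the positive cone to be $P=\mathbb{R}_{+}$, whose interior $(0,\infty)$ is nonempty --- this last fact being exactly what the separation argument behind the cited theorem requires.

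The first substantive step is to check that both the objective and the constraint functional in (\ref{eq:QCQP}) are convex and real-valued on ${\cal F}_{2|\mathscr{Y}}^{M}$. The objective is quadratic in $\widehat{\boldsymbol{X}}$ with quadratic part $\tfrac{1}{2}\mathbb{E}\{\Vert\widehat{\boldsymbol{X}}\Vert_{2}^{2}\}$, hence convex; the constraint functional is quadratic with quadratic part $\mathbb{E}\{\widehat{\boldsymbol{X}}^{\boldsymbol{T}}\boldsymbol{\Sigma}_{\boldsymbol{X}|\boldsymbol{Y}}\widehat{\boldsymbol{X}}\}$, which is convex because $\boldsymbol{\Sigma}_{\boldsymbol{X}|\boldsymbol{Y}}\succeq{\bf 0}$ pointwise, plus an affine term. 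That both functionals are finite on ${\cal F}_{2|\mathscr{Y}}^{M}$ --- and that the defining expectations, conditional expectations, and dot products are all well-posed --- is precisely the content of Lemma \ref{lem:QCQP} under Assumption \ref{AssumptionMain}, so no additional integrability work is needed.

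Next I would verify Slater's condition in the form required by the cited theorem: the existence of a point in ${\cal F}_{2|\mathscr{Y}}^{M}$ mapping strictly into the interior of $-P$, i.e. a strictly feasible $\widehat{\boldsymbol{X}}$. Assumption \ref{AssumptionMain2} supplies $\widehat{\boldsymbol{X}}_{\dagger}\in{\cal L}_{2|\mathscr{Y}}^{M}$ with $\mathbb{E}\{\mathbb{V}_{\boldsymbol{Y}}\{\Vert\boldsymbol{X}-\widehat{\boldsymbol{X}}_{\dagger}\Vert_{2}^{2}\}\}<\varepsilon$; translating this through the equivalence established in Lemma \ref{lem:QCQP} shows that $\widehat{\boldsymbol{X}}_{\dagger}$ makes the left-hand side of the QCQP constraint strictly smaller than its right-hand side, hence strictly feasible, and in particular that $\mathbb{E}\{\widehat{\boldsymbol{X}}_{\dagger}^{\boldsymbol{T}}\boldsymbol{\Sigma}_{\boldsymbol{X}|\boldsymbol{Y}}\widehat{\boldsymbol{X}}_{\dagger}\}<\infty$, so indeed $\widehat{\boldsymbol{X}}_{\dagger}\in{\cal F}_{2|\mathscr{Y}}^{M}$. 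This also yields $\mathpzc{P}^{*}<\infty$, since the objective is finite at $\widehat{\boldsymbol{X}}_{\dagger}$.

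With convexity, finiteness, and Slater in hand, the cited theorem applies and delivers at once the three conclusions: zero duality gap $\mathpzc{D}^{*}\equiv\mathpzc{P}^{*}$, attainment of the dual supremum by some $\mu_{*}\ge0$ (nonemptiness of $\arg\max_{\mu\ge0}\mathpzc{D}(\mu)$), and, whenever a primal optimum $\widehat{\boldsymbol{X}}_{*}$ exists, the complementary-slackness/saddle-point property that $\widehat{\boldsymbol{X}}_{*}$ minimizes $\mathpzc{L}(\cdot,\mu_{*})$ over ${\cal F}_{2|\mathscr{Y}}^{M}$. The remaining bound $0\le\mathpzc{D}^{*}$ then follows from $\mathpzc{P}^{*}\ge0$, since the objective of (\ref{eq:QCQP}) equals $\tfrac{1}{2}\mathbb{E}\{\Vert\boldsymbol{X}-\widehat{\boldsymbol{X}}\Vert_{2}^{2}\}\ge0$ by the first identity in the proof of Lemma \ref{lem:QCQP}. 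I expect the only genuine obstacle to be the verification that the Slater point of Assumption \ref{AssumptionMain2} actually lands inside the effective domain ${\cal F}_{2|\mathscr{Y}}^{M}$ and is strictly feasible \emph{there}; everything else is bookkeeping already absorbed into Lemma \ref{lem:QCQP}, which is why the argument reduces to a direct application of the abstract result.
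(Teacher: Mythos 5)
Your proposal is correct and follows precisely the route the paper itself indicates: the paper omits the proof on the grounds that it is a direct application of the strong-duality result in (\cite{Luenberger1968}, Section 8.3, Theorem 1), and your argument simply fills in the standard verifications (convexity and finiteness via Lemma \ref{lem:QCQP}, strict feasibility of the Slater point from Assumption \ref{AssumptionMain2}, nonnegativity of the objective) needed to invoke that theorem. No genuine gap; this is the same approach, made explicit.
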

Leveraging Theorem \ref{thm:ZDG_Var}, it is possible to show that,
under Assumptions \ref{AssumptionMain} and \ref{AssumptionMain2},
the QCQP (\ref{eq:QCQP}) and, therefore, the original ${\cal L}_{2}$
risk-aware MMSE problem (\ref{eq:Base_Problem-L2}), admit a common
closed form solution. In this respect, we have the next theorem, which
constitutes the main result of this paper.
\begin{thm}
\textbf{\textup{(QCQP (\ref{eq:QCQP}): Closed-Form Solution)}}\label{thm:Solution}
Suppose that Assumptions \ref{AssumptionMain} and \ref{AssumptionMain2}
are in effect. Then, an optimal solution to problem (\ref{eq:QCQP})
may be expressed as (with slight abuse of notation)
\begin{equation}
\boxed{\widehat{\boldsymbol{X}}_{*}(\mu_{*})\equiv\dfrac{\mathbb{E}\big\{\hspace{-1pt}\boldsymbol{X}|\boldsymbol{Y}\big\}+\mu_{*}\big(\mathbb{E}\big\{\hspace{-1pt}\Vert\boldsymbol{X}\Vert_{2}^{2}\boldsymbol{X}|\boldsymbol{Y}\big\}\hspace{-0.5pt}-\mathbb{E}\big\{\hspace{-1pt}\Vert\boldsymbol{X}\Vert_{2}^{2}|\boldsymbol{Y}\big\}\mathbb{E}\{\boldsymbol{X}|\boldsymbol{Y}\}\big)}{\boldsymbol{I}+2\mu_{*}\boldsymbol{\Sigma}_{\boldsymbol{X}|\boldsymbol{Y}}},}\label{eq:ClosedForm}
\end{equation}
with $\widehat{\boldsymbol{X}}_{*}(\mu_{*})\in\arg\min_{\widehat{\boldsymbol{X}}\in{\cal F}_{2|\mathscr{Y}}^{M}}\mathpzc{L}\big(\widehat{\boldsymbol{X}},\mu_{*}\big)$,
and where $\mu_{*}\equiv\mu_{*}(\varepsilon)\in\mathbb{R}_{+}$ is
an optimal solution to the concave dual problem 
\begin{flalign}
\hspace{-1pt}\hspace{-1pt}\hspace{-1pt}\hspace{-1pt}\hspace{-1pt}\sup_{\mu\ge0}\mathpzc{D}(\mu) & \hspace{-1pt}\equiv\hspace{-1pt}\sup_{\mu\ge0}\inf_{\widehat{\boldsymbol{X}}\in{\cal F}_{2|\mathscr{Y}}^{M}}\mathpzc{L}\big(\widehat{\boldsymbol{X}},\mu\big)\nonumber \\
 & \hspace{-1pt}\equiv\hspace{-1pt}\hspace{-1pt}\dfrac{1}{2}\mathbb{E}\big\{\Vert\boldsymbol{X}\Vert_{2}^{2}\hspace{-1pt}\big\}+\dfrac{1}{4}\sup_{\mu\ge0}\Big\{\mu\mathbb{E}\big\{\mathbb{V}_{\boldsymbol{Y}}\{\Vert\boldsymbol{X}\Vert_{2}^{2}\}\hspace{-1pt}\big\}\nonumber \\
 & \quad-2\mathbb{E}\big\{\widehat{\boldsymbol{X}}_{*}^{\boldsymbol{T}}(\mu)(\boldsymbol{I}+2\mu\boldsymbol{\Sigma}_{\boldsymbol{X}|\boldsymbol{Y}})\widehat{\boldsymbol{X}}_{*}(\mu)\hspace{-1pt}\big\}-\mu\varepsilon\Big\}.\hspace{-1pt}\hspace{-1pt}\hspace{-1pt}
\end{flalign}
Additionally, the optimal risk-aware filter $\widehat{\boldsymbol{X}}_{*}(\mu_{*})$
is unique, almost everywhere relative to ${\cal P}$. 
\end{thm}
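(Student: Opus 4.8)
The plan is to combine the strong-duality guarantee of Theorem~\ref{thm:ZDG_Var} with an explicit, conditionally pointwise minimization of the Lagrangian. By Theorem~\ref{thm:ZDG_Var}, strong duality holds, the dual optimal set is nonempty, and any primal optimizer satisfies $\widehat{\boldsymbol{X}}_*\equiv\widehat{\boldsymbol{X}}_*(\mu_*)\in\arg\min_{\widehat{\boldsymbol{X}}\in{\cal F}_{2|\mathscr{Y}}^{M}}\mathpzc{L}(\widehat{\boldsymbol{X}},\mu_*)$ for some $\mu_*\ge0$. It therefore suffices to evaluate, for an \emph{arbitrary fixed} $\mu\ge0$, the inner minimizer of $\mathpzc{L}(\cdot,\mu)$, and only afterwards specialize to $\mu=\mu_*$; the dependence of $\widehat{\boldsymbol{X}}_*$ on $\varepsilon$ then enters solely through $\mu_*\equiv\mu_*(\varepsilon)$.

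The key observation is that, up to terms independent of $\widehat{\boldsymbol{X}}$, the Lagrangian is an expectation of a \emph{conditionally separable, strongly convex quadratic}. Collecting the quadratic and linear contributions, I would write
\begin{equation}
\mathpzc{L}(\widehat{\boldsymbol{X}},\mu)=\mathbb{E}\Big\{\tfrac{1}{2}\widehat{\boldsymbol{X}}^{\boldsymbol{T}}\boldsymbol{A}_\mu\widehat{\boldsymbol{X}}-\boldsymbol{c}_\mu^{\boldsymbol{T}}\widehat{\boldsymbol{X}}\Big\}+\kappa(\mu),
\end{equation}
where $\boldsymbol{A}_\mu\triangleq\boldsymbol{I}+2\mu\boldsymbol{\Sigma}_{\boldsymbol{X}|\boldsymbol{Y}}$, the $\mathscr{Y}$-measurable vector $\boldsymbol{c}_\mu\triangleq\mathbb{E}\{\boldsymbol{X}|\boldsymbol{Y}\}+\mu(\mathbb{E}\{\|\boldsymbol{X}\|_2^2\boldsymbol{X}|\boldsymbol{Y}\}-\mathbb{E}\{\|\boldsymbol{X}\|_2^2|\boldsymbol{Y}\}\mathbb{E}\{\boldsymbol{X}|\boldsymbol{Y}\})$ matches the numerator of~(\ref{eq:ClosedForm}), and $\kappa(\mu)$ gathers the $\widehat{\boldsymbol{X}}$-free constants. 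Since $\boldsymbol{\Sigma}_{\boldsymbol{X}|\boldsymbol{Y}}\succeq\boldsymbol{0}$ and $\mu\ge0$, almost everywhere $\boldsymbol{A}_\mu\succeq\boldsymbol{I}\succ\boldsymbol{0}$, hence $\boldsymbol{A}_\mu$ is invertible with $\boldsymbol{A}_\mu^{-1}\preceq\boldsymbol{I}$. Completing the square inside the expectation, pointwise in $\boldsymbol{Y}$, rewrites the integrand as $\tfrac{1}{2}(\widehat{\boldsymbol{X}}-\boldsymbol{A}_\mu^{-1}\boldsymbol{c}_\mu)^{\boldsymbol{T}}\boldsymbol{A}_\mu(\widehat{\boldsymbol{X}}-\boldsymbol{A}_\mu^{-1}\boldsymbol{c}_\mu)-\tfrac{1}{2}\boldsymbol{c}_\mu^{\boldsymbol{T}}\boldsymbol{A}_\mu^{-1}\boldsymbol{c}_\mu$. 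The first summand is a.e.\ nonnegative and vanishes iff $\widehat{\boldsymbol{X}}=\boldsymbol{A}_\mu^{-1}\boldsymbol{c}_\mu$, which is precisely the boxed expression~(\ref{eq:ClosedForm}). Because $\boldsymbol{A}_\mu\succeq\boldsymbol{I}$ makes the functional strongly convex of modulus one in the ${\cal L}_2$ norm, this minimizer is unique ${\cal P}$-almost everywhere.

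Before the completing-the-square step can be trusted, one must certify that the candidate $\boldsymbol{A}_\mu^{-1}\boldsymbol{c}_\mu$ actually lies in the effective domain ${\cal F}_{2|\mathscr{Y}}^{M}$, so that the interchange of the pointwise minimization with the expectation is legitimate and $\mathpzc{L}(\cdot,\mu)$ is finite there. This is where Assumption~\ref{AssumptionMain} does the real work, exactly as in the proof of Lemma~\ref{lem:QCQP}: from $\boldsymbol{A}_\mu^{-1}\preceq\boldsymbol{I}$ one gets $\|\boldsymbol{A}_\mu^{-1}\boldsymbol{c}_\mu\|_2\le\|\boldsymbol{c}_\mu\|_2$, and the triangle inequality together with the conditional-Jensen bounds already established in Lemma~\ref{lem:QCQP} for $\|\mathbb{E}\{\boldsymbol{X}|\boldsymbol{Y}\}\|_2$, $\mathbb{E}\{\|\boldsymbol{X}\|_2^3|\boldsymbol{Y}\}$, and $\mathbb{E}\{\|\boldsymbol{X}\|_2|\boldsymbol{Y}\}\mathbb{E}\{\|\boldsymbol{X}\|_2^2|\boldsymbol{Y}\}$ give $\|\boldsymbol{c}_\mu\|_2\in{\cal L}_2$, whence $\boldsymbol{A}_\mu^{-1}\boldsymbol{c}_\mu\in{\cal L}_{2|\mathscr{Y}}^{M}$ and $\mathbb{E}\{\widehat{\boldsymbol{X}}_*^{\boldsymbol{T}}\boldsymbol{\Sigma}_{\boldsymbol{X}|\boldsymbol{Y}}\widehat{\boldsymbol{X}}_*\}<\infty$. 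Measurability of the minimizer is immediate, as matrix inversion is continuous and $\boldsymbol{A}_\mu,\boldsymbol{c}_\mu$ are $\mathscr{Y}$-measurable. I expect this integrability bookkeeping to be the only genuinely delicate point; the algebra itself is forced, and primal feasibility (hence primal optimality, not merely candidacy) follows from the saddle-point structure and complementary slackness at $(\widehat{\boldsymbol{X}}_*(\mu_*),\mu_*)$ furnished by Theorem~\ref{thm:ZDG_Var}.

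Finally, to obtain the stated dual program I would substitute the minimizer back into $\mathpzc{L}$. Completing the square leaves the residual $-\tfrac{1}{2}\mathbb{E}\{\boldsymbol{c}_\mu^{\boldsymbol{T}}\boldsymbol{A}_\mu^{-1}\boldsymbol{c}_\mu\}\equiv-\tfrac{1}{2}\mathbb{E}\{\widehat{\boldsymbol{X}}_*^{\boldsymbol{T}}(\boldsymbol{I}+2\mu\boldsymbol{\Sigma}_{\boldsymbol{X}|\boldsymbol{Y}})\widehat{\boldsymbol{X}}_*\}$, which, added to $\kappa(\mu)=\tfrac{1}{2}\mathbb{E}\{\|\boldsymbol{X}\|_2^2\}+\tfrac{\mu}{4}\mathbb{E}\{\mathbb{V}_{\boldsymbol{Y}}\{\|\boldsymbol{X}\|_2^2\}\}-\tfrac{\mu\varepsilon}{4}$, reproduces exactly the expression for $\mathpzc{D}(\mu)$ displayed in the theorem. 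Taking the supremum over $\mu\ge0$ and invoking Theorem~\ref{thm:ZDG_Var} once more to guarantee attainment at some $\mu_*$ then closes the argument, with $\widehat{\boldsymbol{X}}_*(\mu_*)$ as in~(\ref{eq:ClosedForm}) and unique ${\cal P}$-a.e.
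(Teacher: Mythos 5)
Your proposal is correct, and its overall architecture coincides with the paper's: invoke Theorem \ref{thm:ZDG_Var} for strong duality and dual attainment, minimize the Lagrangian for a generic fixed $\mu\ge0$ by reducing to a pointwise (in $\boldsymbol{Y}$) strongly convex quadratic, verify via Assumption \ref{AssumptionMain} and the bounds from Lemma \ref{lem:QCQP} that the candidate lies in ${\cal L}_{2|\mathscr{Y}}^{M}$, and back-substitute to obtain the stated dual program. The one genuine difference is how you justify passing from the variational minimization over ${\cal F}_{2|\mathscr{Y}}^{M}$ to the pointwise minimization: the paper establishes that the integrand $\mathpzc{r}(\cdot,\boldsymbol{Y})$ is Carath\'{e}odory and then invokes the Interchangeability Principle (\cite{ShapiroLectures_2ND}, Theorem 7.92; \cite{Rockafellar2009VarAn}, Theorem 14.60), whereas you complete the square inside the expectation and observe that the resulting nonnegative term vanishes exactly at the ($\mathscr{Y}$-measurable, square-integrable) candidate $\boldsymbol{A}_\mu^{-1}\boldsymbol{c}_\mu$, while the residual $-\tfrac{1}{2}\mathbb{E}\{\boldsymbol{c}_\mu^{\boldsymbol{T}}\boldsymbol{A}_\mu^{-1}\boldsymbol{c}_\mu\}$ is finite by the same integrability bookkeeping. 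Your route is more elementary and self-contained -- it proves the needed instance of the interchange directly and delivers the ${\cal P}$-a.e.\ uniqueness as a byproduct of strong convexity of modulus one -- at the cost of being tailored to the quadratic structure; the paper's route is heavier but modular, as the Interchangeability Principle would survive a non-quadratic integrand. One small point worth tightening: your closing appeal to ``saddle-point structure and complementary slackness'' to upgrade the Lagrangian minimizer to a primal optimizer is looser than the paper's argument, which notes that an existing primal optimizer must itself minimize $\mathpzc{L}(\cdot,\mu_*)$ and, by the a.e.\ uniqueness you already established, must coincide a.e.\ with the closed form -- a cleaner way to close that loop with the tools at hand.
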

\begin{proof}[Proof of Theorem \ref{thm:Solution}]
First, for every $\mu\in\mathbb{R}_{+}$, let us consider the determination
of the dual function $\mathpzc{D}$ through solving the problem
\begin{equation}
\begin{array}{rl}
\mathrm{minimize} & \mathpzc{L}\big(\widehat{\boldsymbol{X}},\mu\big)\\
\mathrm{subject\,to} & \widehat{\boldsymbol{X}}\in{\cal F}_{2|\mathscr{Y}}^{M}
\end{array}.\label{eq:LAG_1}
\end{equation}
Let us also define the possibly extended real-valued, random function
$\mathpzc{r}(\cdot,\boldsymbol{Y}):\mathbb{R}^{M}\times\Omega\rightarrow\overline{\mathbb{R}}$,
quadratic in its first argument, as
\begin{flalign}
\mathpzc{r}(\boldsymbol{x},\boldsymbol{Y}) & \triangleq\dfrac{1}{2}\boldsymbol{x}^{\boldsymbol{T}}\big(\boldsymbol{I}+2\mu\boldsymbol{\Sigma}_{\boldsymbol{X}|\boldsymbol{Y}}\big)\boldsymbol{x}\nonumber \\
 & \quad-\big(\mathbb{E}\big\{\hspace{-1pt}\boldsymbol{X}|\boldsymbol{Y}\big\}+\mu\big(\mathbb{E}\big\{\hspace{-1pt}\Vert\boldsymbol{X}\Vert_{2}^{2}\boldsymbol{X}|\boldsymbol{Y}\hspace{-1pt}\big\}-\mathbb{E}\big\{\hspace{-1pt}\Vert\boldsymbol{X}\Vert_{2}^{2}|\boldsymbol{Y}\hspace{-1pt}\big\}\mathbb{E}\{\boldsymbol{X}|\boldsymbol{Y}\}\big)\big)^{\boldsymbol{T}}\boldsymbol{x}.
\end{flalign}
Observe that the quadratic term $\boldsymbol{x}^{\boldsymbol{T}}\big(\boldsymbol{I}+2\mu\boldsymbol{\Sigma}_{\boldsymbol{X}|\boldsymbol{Y}}\big)\boldsymbol{x}$
is finite ${\cal P}_{\boldsymbol{Y}}$-almost everywhere; indeed,
for every $\boldsymbol{x}\in\mathbb{R}^{M}$, it is true that
\begin{flalign}
0\le\boldsymbol{x}^{\boldsymbol{T}}\boldsymbol{\Sigma}_{\boldsymbol{X}|\boldsymbol{Y}}\boldsymbol{x} & \le\Vert\boldsymbol{x}\Vert_{2}^{2}\lambda_{max}(\boldsymbol{\Sigma}_{\boldsymbol{X}|\boldsymbol{Y}})\nonumber \\
 & \le\Vert\boldsymbol{x}\Vert_{2}^{2}\mathrm{tr}(\boldsymbol{\Sigma}_{\boldsymbol{X}|\boldsymbol{Y}})\nonumber \\
 & \equiv\Vert\boldsymbol{x}\Vert_{2}^{2}\mathbb{E}\big\{\hspace{-1pt}\Vert\boldsymbol{X}-\mathbb{E}\{\boldsymbol{X}|\boldsymbol{Y}\}\Vert_{2}^{2}|\boldsymbol{Y}\hspace{-1pt}\big\}\nonumber \\
 & \equiv\Vert\boldsymbol{x}\Vert_{2}^{2}\big(\mathbb{E}\big\{\hspace{-1pt}\Vert\boldsymbol{X}\Vert_{2}^{2}|\boldsymbol{Y}\hspace{-1pt}\big\}-\Vert\mathbb{E}\{\boldsymbol{X}|\boldsymbol{Y}\}\Vert_{2}^{2}\big)\nonumber \\
 & \le\Vert\boldsymbol{x}\Vert_{2}^{2}\mathbb{E}\big\{\hspace{-1pt}\Vert\boldsymbol{X}\Vert_{2}^{2}|\boldsymbol{Y}\hspace{-1pt}\big\},
\end{flalign}
where
\begin{flalign}
0\le\big(\mathbb{E}\big\{\hspace{-1pt}\Vert\boldsymbol{X}\Vert_{2}^{2}\hspace{-1pt}\big\}\big)^{3} & \le\big(\mathbb{E}\big\{\hspace{-1pt}\Vert\boldsymbol{X}\Vert_{2}^{3}\hspace{-1pt}\big\}\big)^{2}\nonumber \\
 & \le\mathbb{E}\big\{\hspace{-1pt}\big(\mathbb{E}\big\{\hspace{-1pt}\Vert\boldsymbol{X}\Vert_{2}^{3}|\boldsymbol{Y}\hspace{-1pt}\big\}\big)^{2}\hspace{-1pt}\big\}\nonumber \\
 & <\infty\quad\implies\quad\mathbb{E}\big\{\hspace{-1pt}\Vert\boldsymbol{X}\Vert_{2}^{2}|\boldsymbol{Y}\hspace{-1pt}\big\}<\infty,\quad{\cal P}_{\boldsymbol{Y}}-a.e.
\end{flalign}
Therefore, due to our assumptions, the function $\mathpzc{r}(\cdot,\boldsymbol{Y})$
is trivially continuous and finite on $\mathbb{R}^{M}$ up to sets
of ${\cal P}_{\boldsymbol{Y}}$-measure zero, those being independent
of each choice of $\boldsymbol{x}\in\mathbb{R}^{M}$, on which $\mathpzc{r}(\cdot,\boldsymbol{Y})$
may be arbitrarily defined. Consequently, $\mathpzc{r}(\cdot,\boldsymbol{Y})$
has a real-valued version, and thus may be taken as Carath\'{e}odory
on $\mathbb{R}^{M}\times\Omega$ (\cite{ShapiroLectures_2ND}, p.
421). Equivalently, $\mathpzc{r}$ may also be taken as Carath\'{e}odory
on $\mathbb{R}^{M}\times\mathbb{R}^{N}$, jointly measurable relative
to the Borel $\sigma$-algebra $\mathscr{B}\big(\mathbb{R}^{M}\times\mathbb{R}^{N}\big)$.

Under the above considerations, and given that Assumptions \ref{AssumptionMain}
and \ref{AssumptionMain2} are in effect, we may drop additive terms
which do not depend on the decision $\widehat{\boldsymbol{X}}$ in
problem (\ref{eq:LAG_1}), resulting in the equivalent problem\vspace{-2bp}
\begin{equation}
\begin{array}{rl}
\mathrm{minimize} & \mathbb{E}\{\mathpzc{r}(\widehat{\boldsymbol{X}},\boldsymbol{Y})\}\\
\mathrm{subject\,to} & \widehat{\boldsymbol{X}}\in{\cal L}_{2|\mathscr{Y}}^{M}
\end{array},\label{eq:LAG_2}
\end{equation}
where expectation may be conveniently taken directly over the Borel
probability space $\big(\mathbb{R}^{N},\mathscr{B}\big(\mathbb{R}^{N}\big),{\cal P}_{\boldsymbol{Y}}\big)$.
Note that (\ref{eq:LAG_2}) is uniformly lower bounded over ${\cal L}_{2|\mathscr{Y}}^{M}$,
through the definition of the Lagrangian $\mathpzc{L}$, and also
that, trivially, there is at least one choice of $\widehat{\boldsymbol{X}}\in{\cal L}_{2|\mathscr{Y}}^{M}$
such that $\mathbb{E}\{\mathpzc{r}(\widehat{\boldsymbol{X}},\boldsymbol{Y})\}<\infty$,
say $\mathbb{E}\{\mathpzc{r}(\boldsymbol{0},\boldsymbol{Y})\}\equiv0$,
for $\widehat{\boldsymbol{X}}\equiv{\bf 0}$.

Problem (\ref{eq:LAG_2}) may now be solved in closed form via application
of the \textit{Interchangeability Principle} (\cite{ShapiroLectures_2ND},
Theorem 7.92, or \cite{Rockafellar2009VarAn}, Theorem 14.60), which
is a fundamental result in variational optimization. To avoid unnecessary
generalities, we state it here for completeness adapted to our setting,
as follows.\medskip{}

\noindent %
\noindent\begin{minipage}[t]{1\columnwidth}%
\begin{thm}
\textbf{\textup{(Interchangeability Principle \cite{ShapiroLectures_2ND,Rockafellar2009VarAn})}}\label{thm:IP}
Let $\mathpzc{f}:\mathbb{R}^{M}\times\mathbb{R}^{N}\rightarrow\mathbb{R}$
be Carath\'{e}odory, and fix $p\in[1,\infty]$. It is true that
\begin{equation}
{\textstyle \inf_{\widehat{\boldsymbol{X}}\in{\cal L}_{p|\mathscr{Y}}^{M}}}\,\mathbb{E}\big\{\mathpzc{f}(\widehat{\boldsymbol{X}},\boldsymbol{Y})\hspace{-1pt}\big\}\equiv\mathbb{E}\big\{{\textstyle \inf_{\boldsymbol{x}\in\mathbb{R}^{M}}}\,\mathpzc{f}(\boldsymbol{x},\boldsymbol{Y})\hspace{-1pt}\big\},\label{eq:IP}
\end{equation}
provided that the left-hand side of (\ref{eq:IP}) is less that $+\infty$.
If, additionally, either of the sides of (\ref{eq:IP}) is not $-\infty$,
it is also true that
\begin{flalign}
 & \widehat{\boldsymbol{X}}_{*}\hspace{-1pt}\in\hspace{-1pt}{\textstyle \arg\min_{\widehat{\boldsymbol{X}}\in{\cal L}_{p|\mathscr{Y}}^{M}}}\,\mathbb{E}\big\{\mathpzc{f}(\widehat{\boldsymbol{X}},\boldsymbol{Y})\big\}\\
\hspace{-1pt}\hspace{-1pt}\iff\, & \widehat{\boldsymbol{X}}_{*}\hspace{-1pt}\in\hspace{-1pt}{\textstyle \arg\min_{\boldsymbol{x}\in\mathbb{R}^{M}}}\,\mathpzc{f}(\boldsymbol{x},\boldsymbol{Y}),\hspace{-1pt}\hspace{-1pt}\text{ for }{\cal P}_{\boldsymbol{Y}}\text{-almost all }\boldsymbol{Y},\hspace{-1pt}\hspace{-1pt}\text{ and }\widehat{\boldsymbol{X}}_{*}\hspace{-1pt}\in\hspace{-1pt}{\cal L}_{p|\mathscr{Y}}^{M}.
\end{flalign}
\end{thm}
\end{minipage}\vspace{8bp}

Let us apply Theorem \ref{thm:IP} to the variational problem (\ref{eq:LAG_2}),
for $p\equiv2$. Then, the (\ref{eq:LAG_2}) may be exchanged by the
pointwise (over constants) quadratic problem
\begin{equation}
{\textstyle \inf_{\boldsymbol{x}\in\mathbb{R}^{M}}}\,\mathpzc{r}(\boldsymbol{x},\boldsymbol{Y}),\label{eq:LAG_3}
\end{equation}
whose unique solution is, for every $\mu\in\mathbb{R}_{+}$ and for
every value of $\boldsymbol{Y}\in\mathbb{R}^{N}$,\vspace{-4.5bp}
\begin{multline}
\quad\quad\quad\quad\;\,\widehat{\boldsymbol{X}}_{*}(\mu)\equiv(\boldsymbol{I}+2\mu\boldsymbol{\Sigma}_{\boldsymbol{X}|\boldsymbol{Y}})^{-1}\big(\mathbb{E}\big\{\hspace{-1pt}\boldsymbol{X}|\boldsymbol{Y}\big\}\\
+\mu\big(\mathbb{E}\big\{\hspace{-1pt}\Vert\boldsymbol{X}\Vert_{2}^{2}\boldsymbol{X}|\boldsymbol{Y}\big\}\hspace{-1pt}\hspace{-1pt}-\hspace{-1pt}\mathbb{E}\big\{\hspace{-1pt}\Vert\boldsymbol{X}\Vert_{2}^{2}|\boldsymbol{Y}\big\}\mathbb{E}\{\boldsymbol{X}|\boldsymbol{Y}\}\big)\hspace{-1pt}\big),\quad\quad\quad\quad\label{eq:SOL_1}
\end{multline}
which is precisely the expression claimed in Theorem \ref{thm:Solution},
for a generic $\mu$. In order to show that (\ref{eq:SOL_1}) is a
solution of problem (\ref{eq:LAG_2}) and, in turn, (\ref{eq:LAG_1}),
we also have to verify that $\widehat{\boldsymbol{X}}_{*}(\mu)\in{\cal L}_{2|\mathscr{Y}}^{M}$.
We may write, by Cauchy-Schwarz (note that $\Vert(\boldsymbol{I}+2\mu\boldsymbol{\Sigma}_{\boldsymbol{X}|\boldsymbol{Y}})^{-1}\Vert_{2}\le1$),
the triangle inequality, and Jensen,
\begin{flalign}
\Vert\widehat{\boldsymbol{X}}_{*}(\mu)\Vert_{2} & \le\Vert\mathbb{E}\{\boldsymbol{X}|\boldsymbol{Y}\}+\mu\big(\mathbb{E}\big\{\hspace{-1pt}\Vert\boldsymbol{X}\Vert_{2}^{2}\boldsymbol{X}|\boldsymbol{Y}\big\}\hspace{-1pt}\hspace{-1pt}-\hspace{-1pt}\mathbb{E}\big\{\hspace{-1pt}\Vert\boldsymbol{X}\Vert_{2}^{2}|\boldsymbol{Y}\big\}\mathbb{E}\{\boldsymbol{X}|\boldsymbol{Y}\}\big)\Vert_{2}\nonumber \\
 & \le\Vert\mathbb{E}\{\boldsymbol{X}|\boldsymbol{Y}\}\Vert_{2}+\mu\Vert\mathbb{E}\big\{\hspace{-1pt}\Vert\boldsymbol{X}\Vert_{2}^{2}\boldsymbol{X}|\boldsymbol{Y}\big\}\Vert_{2}+\mu\Vert\mathbb{E}\big\{\hspace{-1pt}\Vert\boldsymbol{X}\Vert_{2}^{2}|\boldsymbol{Y}\big\}\mathbb{E}\{\boldsymbol{X}|\boldsymbol{Y}\}\Vert_{2}\nonumber \\
 & \le\mathbb{E}\{\Vert\boldsymbol{X}\Vert_{2}|\boldsymbol{Y}\}+\mu\mathbb{E}\big\{\hspace{-1pt}\Vert\boldsymbol{X}\Vert_{2}^{3}|\boldsymbol{Y}\big\}+\mu\mathbb{E}\big\{\hspace{-1pt}\Vert\boldsymbol{X}\Vert_{2}^{2}|\boldsymbol{Y}\big\}\mathbb{E}\{\Vert\boldsymbol{X}\Vert_{2}|\boldsymbol{Y}\},\label{eq:VER_1}
\end{flalign}
and we are done, since we have already shown that all three terms
in the right-hand side of (\ref{eq:VER_1}) are in ${\cal L}_{2|\mathscr{Y}}^{1}$.

The final step in the proof of Theorem \ref{thm:Solution} is to exploit
strong duality of the QCQP (\ref{eq:QCQP}) by invoking Theorem \ref{thm:ZDG_Var}.
Indeed, it follows that the optimal value of the primal problem (\ref{eq:QCQP})
coincides with that of the dual problem (\ref{eq:DUAL}), which may
be expressed as
\begin{flalign}
 & \hspace{-0.5pt}\sup_{\mu\ge0}\mathpzc{D}(\mu)\nonumber \\
 & \equiv\hspace{-1pt}\sup_{\mu\ge0}\inf_{\widehat{\boldsymbol{X}}\in{\cal F}_{2|\mathscr{Y}}^{M}}\mathpzc{L}\big(\widehat{\boldsymbol{X}},\mu\big)\nonumber \\
 & \equiv\hspace{-1pt}\sup_{\mu\ge0}\bigg\{\hspace{-1pt}\dfrac{1}{2}\mathbb{E}\big\{\Vert\boldsymbol{X}\Vert_{2}^{2}\hspace{-1pt}\big\}+\dfrac{1}{4}\mu\mathbb{E}\big\{\mathbb{V}_{\boldsymbol{Y}}\{\Vert\boldsymbol{X}\Vert_{2}^{2}\}\hspace{-1pt}\big\}\nonumber \\
 & \;\,+\mathbb{E}\Big\{\dfrac{1}{2}\Vert\widehat{\boldsymbol{X}}_{*}(\mu)\Vert_{2}^{2}+\mu\widehat{\boldsymbol{X}}_{*}(\mu)^{\boldsymbol{T}}\boldsymbol{\Sigma}_{\boldsymbol{X}|\boldsymbol{Y}}\widehat{\boldsymbol{X}}_{*}(\mu)\nonumber \\
 & \;\,\;\,-(\mathbb{E}\big\{\hspace{-1pt}\boldsymbol{X}|\boldsymbol{Y}\big\})^{\boldsymbol{T}}\widehat{\boldsymbol{X}}_{*}(\mu)\hspace{-1pt}-\hspace{-1pt}\mu\big(\mathbb{E}\big\{\hspace{-1pt}\Vert\boldsymbol{X}\Vert_{2}^{2}\boldsymbol{X}|\boldsymbol{Y}\hspace{-1pt}\big\}\hspace{-1pt}-\hspace{-1pt}\mathbb{E}\big\{\hspace{-1pt}\Vert\boldsymbol{X}\Vert_{2}^{2}|\boldsymbol{Y}\hspace{-1pt}\big\}\mathbb{E}\{\boldsymbol{X}|\boldsymbol{Y}\}\big)^{\boldsymbol{T}}\widehat{\boldsymbol{X}}_{*}(\mu)\hspace{-1pt}\Big\}\nonumber \\
 & \;\,\;\,\;\,-\dfrac{\mu\varepsilon}{4}\bigg\}\nonumber \\
 & \equiv\hspace{-1pt}\hspace{-1pt}\dfrac{1}{2}\mathbb{E}\big\{\Vert\boldsymbol{X}\Vert_{2}^{2}\hspace{-1pt}\big\}+\sup_{\mu\ge0}\bigg\{\dfrac{1}{4}\mu\mathbb{E}\big\{\mathbb{V}_{\boldsymbol{Y}}\{\Vert\boldsymbol{X}\Vert_{2}^{2}\}\hspace{-1pt}\big\}\nonumber \\
 & \;\,+\mathbb{E}\Big\{\dfrac{1}{2}\widehat{\boldsymbol{X}}_{*}(\mu)^{\boldsymbol{T}}(\boldsymbol{I}+2\mu\boldsymbol{\Sigma}_{\boldsymbol{X}|\boldsymbol{Y}})\widehat{\boldsymbol{X}}_{*}(\mu)\nonumber \\
 & \;\,\;\,-\big(\mathbb{E}\big\{\hspace{-1pt}\boldsymbol{X}|\boldsymbol{Y}\big\}+\mu\big(\mathbb{E}\big\{\hspace{-1pt}\Vert\boldsymbol{X}\Vert_{2}^{2}\boldsymbol{X}|\boldsymbol{Y}\hspace{-1pt}\big\}-\mathbb{E}\big\{\hspace{-1pt}\Vert\boldsymbol{X}\Vert_{2}^{2}|\boldsymbol{Y}\hspace{-1pt}\big\}\mathbb{E}\{\boldsymbol{X}|\boldsymbol{Y}\}\big)\big)^{\boldsymbol{T}}\widehat{\boldsymbol{X}}_{*}(\mu)\hspace{-1pt}\Big\}-\dfrac{\mu\varepsilon}{4}\bigg\}\nonumber \\
 & \equiv\hspace{-1pt}\hspace{-1pt}\dfrac{1}{2}\mathbb{E}\big\{\Vert\boldsymbol{X}\Vert_{2}^{2}\hspace{-1pt}\big\}+\sup_{\mu\ge0}\bigg\{\dfrac{1}{4}\mu\mathbb{E}\big\{\mathbb{V}_{\boldsymbol{Y}}\{\Vert\boldsymbol{X}\Vert_{2}^{2}\}\hspace{-1pt}\big\}\nonumber \\
 & \;\,+\mathbb{E}\Big\{\dfrac{1}{2}\widehat{\boldsymbol{X}}_{*}(\mu)^{\boldsymbol{T}}(\boldsymbol{I}+2\mu\boldsymbol{\Sigma}_{\boldsymbol{X}|\boldsymbol{Y}})\widehat{\boldsymbol{X}}_{*}(\mu)-\widehat{\boldsymbol{X}}_{*}(\mu)^{\boldsymbol{T}}(\boldsymbol{I}+2\mu\boldsymbol{\Sigma}_{\boldsymbol{X}|\boldsymbol{Y}})\widehat{\boldsymbol{X}}_{*}(\mu)\hspace{-1pt}\Big\}-\dfrac{\mu\varepsilon}{4}\bigg\}\nonumber \\
 & \hspace{-1pt}\equiv\hspace{-1pt}\hspace{-1pt}\dfrac{1}{2}\mathbb{E}\big\{\Vert\boldsymbol{X}\Vert_{2}^{2}\hspace{-1pt}\big\}+\dfrac{1}{4}\sup_{\mu\ge0}\Big\{\mu\mathbb{E}\big\{\mathbb{V}_{\boldsymbol{Y}}\{\Vert\boldsymbol{X}\Vert_{2}^{2}\}\hspace{-1pt}\big\}\nonumber \\
 & \;\,-2\mathbb{E}\big\{\widehat{\boldsymbol{X}}_{*}^{\boldsymbol{T}}(\mu)(\boldsymbol{I}+2\mu\boldsymbol{\Sigma}_{\boldsymbol{X}|\boldsymbol{Y}})\widehat{\boldsymbol{X}}_{*}(\mu)\hspace{-1pt}\big\}-\mu\varepsilon\Big\}.
\end{flalign}
Finally, let $\mu_{*}\ge0$ be a maximizer of $\mathpzc{D}$ over
$\mathbb{R}_{+}$ such that $\mathpzc{D}^{*}\equiv\mathpzc{P}^{*}<\infty$,
and suppose that $\widetilde{\boldsymbol{X}}_{*}$ is primal optimal
for (\ref{eq:QCQP}). By strong duality, it is true that
\begin{equation}
\widetilde{\boldsymbol{X}}_{*}\equiv\widetilde{\boldsymbol{X}}_{*}(\mu_{*})\in{\textstyle \arg\min_{\widehat{\boldsymbol{X}}\in{\cal F}_{2|\mathscr{Y}}^{M}}}\mathpzc{L}\big(\widehat{\boldsymbol{X}},\mu_{*}\big).\label{eq:SOL_2}
\end{equation}
By uniqueness of $\widehat{\boldsymbol{X}}_{*}(\mu_{*})$ in (\ref{eq:SOL_1})
(pointwise in $\boldsymbol{Y}$), all members of the possibly infinite
set of optimal solutions of (\ref{eq:LAG_1}) (for $\mu\equiv\mu_{*}$)
in (\ref{eq:SOL_2}) differ at most on sets of measure zero, and result
in exactly the same values for both the objective and constraints
of (\ref{eq:QCQP}). Therefore, all such optimal solutions to (\ref{eq:LAG_1})
are also optimal for (\ref{eq:QCQP}) and, in particular, $\widehat{\boldsymbol{X}}_{*}(\mu_{*})$
is one of them. Enough said.
\end{proof}
Theorem \ref{thm:Solution} completely solves problem (\ref{eq:Base_Problem-L2})
by providing a closed-form expression for the risk-aware MMSE estimator
$\widehat{\boldsymbol{X}}_{*}$, defined in terms of the dual optimal
solution $\mu_{*}$ (a number). The latter \textit{always} exists,
thanks to Theorem \ref{thm:ZDG_Var}, and may be computed by leveraging
our knowledge of the distribution ${\cal P}_{(\boldsymbol{X},\boldsymbol{Y})}$
and via either some gradient-based method, or even empirically. Note
that the dual function $\mathpzc{D}$ is merely a concave function
on the positive line.

The fact that a closed-form optimal solution to problem (\ref{eq:QCQP})
exists is remarkable, and it provides insight into the intrinsic structure
of \textit{constrained} Bayesian risk-aware estimation, also enabling
an explicit comparison of the optimal risk-aware filter $\widehat{\boldsymbol{X}}_{*}$
with its risk-neutral counterpart. Indeed, by looking at the explicit
form of the optimal risk-aware filter $\widehat{\boldsymbol{X}}_{*}$,
we readily see that it is a function involving the MMSE estimator
$\mathbb{E}\big\{\hspace{-1pt}\boldsymbol{X}|\boldsymbol{Y}\big\}$,
its predictive covariance matrix $\boldsymbol{\Sigma}_{\boldsymbol{X}|\boldsymbol{Y}}$,
as well as the second and third order filters $\mathbb{E}\big\{\hspace{-1pt}\Vert\boldsymbol{X}\Vert_{2}^{2}|\boldsymbol{Y}\big\}$
and $\mathbb{E}\big\{\hspace{-1pt}\Vert\boldsymbol{X}\Vert_{2}^{2}\boldsymbol{X}|\boldsymbol{Y}\big\}$.
All these quantities are elementary and, in principle, they can be
evaluated by utilizing a single observation of $\boldsymbol{Y}$,
and by exploiting our knowledge of the conditional measure ${\cal P}_{\boldsymbol{X}|\boldsymbol{Y}}$,
just as in risk-neutral MMSE estimation.

Also, we see that $\widehat{\boldsymbol{X}}_{*}$ may be regarded
as a \textit{biased MMSE estimator}, drawing parallels to \textit{James-Stein
estimators}, in another statistical context. Through the effect of
bias, while James-Stein estimators achieve lower mean squared error,
$\widehat{\boldsymbol{X}}_{*}$ achieves lower risk. Therefore, \textit{optimal
risk aversion, in the sense of problem (\ref{eq:Base_Problem-V}),
may be interpreted as the result of bias injection in MMSE estimators}.

%it reveals \textit{a new benefit of bias in MMSE estimation, i.e., risk-aversion}, in the sense of problem \eqref{eq:Base_Problem-V}.
%\textcolor{red}{Here: Say something about the fact that the estimator
%is biased. Compare with Stein estimators in statistics, which are
%also biased and with similar form, but for another reason. Our case
%is another example showing that biasing results in something useful.}

Additionally, we observe that the solution is \textit{regularized},
in the sense that the term $2\mu_{*}\boldsymbol{\Sigma}_{\boldsymbol{X}|\boldsymbol{Y}}$
is diagonally loaded with an identity matrix; as a result, $\widehat{\boldsymbol{X}}_{*}$
is always well-defined and numerically stable. In fact, whenever $\mu_{*}\equiv0$
(depending on the magnitude of the tolerance $\varepsilon$), it follows
that $\widehat{\boldsymbol{X}}_{*}\equiv\mathbb{E}\big\{\hspace{-1pt}\boldsymbol{X}|\boldsymbol{Y}\big\}$.
But this is not the only case where the two estimators turn out to
be the same. The next result confirms that there exists a certain
family of models for which risk-neutral and risk-aware MMSE estimation
actually coincide; in such cases, posing (\ref{eq:Base_Problem-L2})
is redundant.
\begin{thm}
\textbf{\textup{(When do Risk-Neutral/Aware Filters Coincide?)}}\label{thm:RiskNeutral}
Suppose that the conditional measure ${\cal P}_{\boldsymbol{X}|\boldsymbol{Y}}$
is such that 
\begin{equation}
\mathbb{E}\big\{\hspace{-1pt}\hspace{-1pt}\big(X_{i}-\mathbb{E}\{X_{i}|\boldsymbol{Y}\}\big)^{2}\big(\boldsymbol{X}-\mathbb{E}\{\boldsymbol{X}|\boldsymbol{Y}\}\big)|\boldsymbol{Y}\big\}\hspace{-1pt}\equiv{\bf 0},\;\forall i\in\mathbb{N}_{M}^{+}.
\end{equation}
Then, under Assumption \ref{AssumptionMain} and in the notation of
Theorem \ref{thm:Solution}, it is true that, for every $\mu\ge0$,
$\widehat{\boldsymbol{X}}_{*}(\mu)\equiv\mathbb{E}\{\boldsymbol{X}|\boldsymbol{Y}\}$.
In other words, under both Assumptions \ref{AssumptionMain} and \ref{AssumptionMain2},
risk-neutral MMSE estimation is also risk-aware, for every qualifying
value of $\varepsilon>0$. In particular, this is the case whenever
${\cal P}_{\boldsymbol{X}|\boldsymbol{Y}}$ is joint Gaussian.
\end{thm}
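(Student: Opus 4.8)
The plan is to substitute directly into the closed-form expression for $\widehat{\boldsymbol{X}}_*(\mu)$ established in Theorem \ref{thm:Solution} and show that the hypothesized vanishing-moment condition collapses the numerator so that it exactly reproduces the denominator factor $\boldsymbol{I}+2\mu\boldsymbol{\Sigma}_{\boldsymbol{X}|\boldsymbol{Y}}$ acting on $\mathbb{E}\{\boldsymbol{X}|\boldsymbol{Y}\}$. First I would introduce the centered variable $\boldsymbol{Z}\triangleq\boldsymbol{X}-\mathbb{E}\{\boldsymbol{X}|\boldsymbol{Y}\}$, for which $\mathbb{E}\{\boldsymbol{Z}|\boldsymbol{Y}\}=\boldsymbol{0}$ and $\mathbb{E}\{\boldsymbol{Z}\boldsymbol{Z}^{\boldsymbol{T}}|\boldsymbol{Y}\}=\boldsymbol{\Sigma}_{\boldsymbol{X}|\boldsymbol{Y}}$ by definition. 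The stated hypothesis is precisely $\mathbb{E}\{(X_i-\mathbb{E}\{X_i|\boldsymbol{Y}\})^2\boldsymbol{Z}|\boldsymbol{Y}\}\equiv\boldsymbol{0}$ for each $i$; summing over $i\in\mathbb{N}_M^+$ and using linearity of the conditional expectation yields $\mathbb{E}\{\Vert\boldsymbol{Z}\Vert_2^2\boldsymbol{Z}|\boldsymbol{Y}\}\equiv\boldsymbol{0}$, i.e.\ the vanishing of the third conditional central moment. All these conditional moments are finite under Assumption \ref{AssumptionMain}, so every manipulation below is well-defined.

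Next I would expand the biasing term appearing in the numerator of (\ref{eq:ClosedForm}), namely $\mathbb{E}\{\Vert\boldsymbol{X}\Vert_2^2\boldsymbol{X}|\boldsymbol{Y}\}-\mathbb{E}\{\Vert\boldsymbol{X}\Vert_2^2|\boldsymbol{Y}\}\mathbb{E}\{\boldsymbol{X}|\boldsymbol{Y}\}$, by writing $\boldsymbol{X}=\mathbb{E}\{\boldsymbol{X}|\boldsymbol{Y}\}+\boldsymbol{Z}$ and $\Vert\boldsymbol{X}\Vert_2^2=\Vert\mathbb{E}\{\boldsymbol{X}|\boldsymbol{Y}\}\Vert_2^2+2(\mathbb{E}\{\boldsymbol{X}|\boldsymbol{Y}\})^{\boldsymbol{T}}\boldsymbol{Z}+\Vert\boldsymbol{Z}\Vert_2^2$. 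Taking conditional expectations and using $\mathbb{E}\{\boldsymbol{Z}|\boldsymbol{Y}\}=\boldsymbol{0}$ together with $\mathbb{E}\{(\boldsymbol{a}^{\boldsymbol{T}}\boldsymbol{Z})\boldsymbol{Z}|\boldsymbol{Y}\}=\boldsymbol{\Sigma}_{\boldsymbol{X}|\boldsymbol{Y}}\boldsymbol{a}$ for any $\boldsymbol{Y}$-measurable $\boldsymbol{a}$, the terms $\Vert\mathbb{E}\{\boldsymbol{X}|\boldsymbol{Y}\}\Vert_2^2\mathbb{E}\{\boldsymbol{X}|\boldsymbol{Y}\}$ and $\mathrm{tr}(\boldsymbol{\Sigma}_{\boldsymbol{X}|\boldsymbol{Y}})\mathbb{E}\{\boldsymbol{X}|\boldsymbol{Y}\}$ cancel exactly against the two pieces of $\mathbb{E}\{\Vert\boldsymbol{X}\Vert_2^2|\boldsymbol{Y}\}\mathbb{E}\{\boldsymbol{X}|\boldsymbol{Y}\}=(\Vert\mathbb{E}\{\boldsymbol{X}|\boldsymbol{Y}\}\Vert_2^2+\mathrm{tr}(\boldsymbol{\Sigma}_{\boldsymbol{X}|\boldsymbol{Y}}))\mathbb{E}\{\boldsymbol{X}|\boldsymbol{Y}\}$, two further terms vanish in conditional expectation, and the only survivors are $2\boldsymbol{\Sigma}_{\boldsymbol{X}|\boldsymbol{Y}}\mathbb{E}\{\boldsymbol{X}|\boldsymbol{Y}\}$ (from the cross term $2(\mathbb{E}\{\boldsymbol{X}|\boldsymbol{Y}\})^{\boldsymbol{T}}\boldsymbol{Z}\cdot\boldsymbol{Z}$) and $\mathbb{E}\{\Vert\boldsymbol{Z}\Vert_2^2\boldsymbol{Z}|\boldsymbol{Y}\}$. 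Hence the biasing term equals $2\boldsymbol{\Sigma}_{\boldsymbol{X}|\boldsymbol{Y}}\mathbb{E}\{\boldsymbol{X}|\boldsymbol{Y}\}+\mathbb{E}\{\Vert\boldsymbol{Z}\Vert_2^2\boldsymbol{Z}|\boldsymbol{Y}\}$, whose second summand is $\boldsymbol{0}$ by the previous paragraph.

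Substituting this identity into (\ref{eq:ClosedForm}), the numerator of $\widehat{\boldsymbol{X}}_*(\mu)$ becomes $\mathbb{E}\{\boldsymbol{X}|\boldsymbol{Y}\}+2\mu\boldsymbol{\Sigma}_{\boldsymbol{X}|\boldsymbol{Y}}\mathbb{E}\{\boldsymbol{X}|\boldsymbol{Y}\}=(\boldsymbol{I}+2\mu\boldsymbol{\Sigma}_{\boldsymbol{X}|\boldsymbol{Y}})\mathbb{E}\{\boldsymbol{X}|\boldsymbol{Y}\}$, which upon left-multiplication by $(\boldsymbol{I}+2\mu\boldsymbol{\Sigma}_{\boldsymbol{X}|\boldsymbol{Y}})^{-1}$ collapses to $\mathbb{E}\{\boldsymbol{X}|\boldsymbol{Y}\}$, for every $\mu\ge0$; in particular this holds at the dual optimum $\mu_*$, so the risk-aware filter coincides with the risk-neutral one. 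For the Gaussian claim I would simply observe that a jointly Gaussian posterior ${\cal P}_{\boldsymbol{X}|\boldsymbol{Y}}$ is symmetric about its conditional mean, so every third-order central moment $\mathbb{E}\{Z_iZ_jZ_k|\boldsymbol{Y}\}$ vanishes, and in particular the hypothesis $\mathbb{E}\{Z_i^2\boldsymbol{Z}|\boldsymbol{Y}\}\equiv\boldsymbol{0}$ is automatically satisfied.

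I do not anticipate a genuine obstacle: once the centering $\boldsymbol{Z}=\boldsymbol{X}-\mathbb{E}\{\boldsymbol{X}|\boldsymbol{Y}\}$ is introduced the computation is elementary. The only points requiring mild care are the bookkeeping of which expanded terms survive versus cancel, and the observation that summing the coordinatewise hypothesis over $i$ is exactly what assembles the scalar weight $\Vert\boldsymbol{Z}\Vert_2^2$ and thus produces the third-moment vector $\mathbb{E}\{\Vert\boldsymbol{Z}\Vert_2^2\boldsymbol{Z}|\boldsymbol{Y}\}$ whose vanishing drives the result.
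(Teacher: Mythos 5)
Your proposal is correct and follows essentially the same route as the paper: both arguments reduce to showing that the hypothesis forces the biasing vector $\mathbb{E}\{\Vert\boldsymbol{X}\Vert_{2}^{2}\boldsymbol{X}|\boldsymbol{Y}\}-\mathbb{E}\{\Vert\boldsymbol{X}\Vert_{2}^{2}|\boldsymbol{Y}\}\mathbb{E}\{\boldsymbol{X}|\boldsymbol{Y}\}$ to equal $2\boldsymbol{\Sigma}_{\boldsymbol{X}|\boldsymbol{Y}}\mathbb{E}\{\boldsymbol{X}|\boldsymbol{Y}\}$, after which the numerator of (\ref{eq:ClosedForm}) factors as $(\boldsymbol{I}+2\mu\boldsymbol{\Sigma}_{\boldsymbol{X}|\boldsymbol{Y}})\mathbb{E}\{\boldsymbol{X}|\boldsymbol{Y}\}$ and cancels the denominator, your centering via $\boldsymbol{Z}$ being merely a cleaner bookkeeping of the coordinatewise expansion the paper performs directly on the hypothesis before summing over $i$. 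The only substantive divergence is in the Gaussian specialization, where you invoke the symmetry of the centered conditional Gaussian (so all odd conditional central moments vanish) while the paper appeals to Stein's lemma; both justifications are valid.
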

\begin{proof}[Proof of Theorem \ref{thm:RiskNeutral}]
First, it is a simple exercise to show that (note that all involved
conditional expectations in the expression above assume finite values
due to Assumption \ref{AssumptionMain})
\begin{align}
{\bf 0} & \equiv\mathbb{E}\big\{\hspace{-1pt}\hspace{-1pt}\big(X_{i}-\mathbb{E}\big\{\hspace{-1pt}X_{i}|\boldsymbol{Y}\big\}\big)^{2}\big(\boldsymbol{X}-\mathbb{E}\big\{\hspace{-1pt}\boldsymbol{X}|\boldsymbol{Y}\big\}\big)|\boldsymbol{Y}\big\}\nonumber \\
 & \equiv\mathbb{E}\big\{\hspace{-1pt}\hspace{-1pt}\big(X_{i}^{2}\boldsymbol{X}\big)|\boldsymbol{Y}\big\}-\mathbb{E}\big\{\hspace{-1pt}\boldsymbol{X}|\boldsymbol{Y}\big\}\mathbb{E}\big\{\hspace{-1pt}\hspace{-1pt}X_{i}^{2}|\boldsymbol{Y}\big\}\nonumber \\
 & \quad-2\mathbb{E}\big\{\hspace{-1pt}X_{i}|\boldsymbol{Y}\big\}\mathbb{E}\big\{\hspace{-1pt}X_{i}\boldsymbol{X}|\boldsymbol{Y}\big\}+2(\mathbb{E}\big\{\hspace{-1pt}X_{i}|\boldsymbol{Y}\big\})^{2}\mathbb{E}\big\{\hspace{-1pt}\boldsymbol{X}|\boldsymbol{Y}\big\}
\end{align}
which of course implies that
\begin{flalign}
\mathbb{E}\big\{\hspace{-1pt}\hspace{-1pt}\big(X_{i}^{2}\boldsymbol{X}\big)|\boldsymbol{Y}\big\} & \equiv\mathbb{E}\{\boldsymbol{X}|\boldsymbol{Y}\}\mathbb{E}\big\{\hspace{-1pt}\hspace{-1pt}X_{i}^{2}|\boldsymbol{Y}\big\}\nonumber \\
 & \quad+2\mathbb{E}\{X_{i}|\boldsymbol{Y}\}\big(\mathbb{E}\{X_{i}\boldsymbol{X}|\boldsymbol{Y}\}-\mathbb{E}\{X_{i}|\boldsymbol{Y}\}\mathbb{E}\{\boldsymbol{X}|\boldsymbol{Y}\}\big).\label{eq:KEY_Coincide}
\end{flalign}
Therefore, we have
\begin{flalign}
 & \hspace{-1pt}\hspace{-1pt}\hspace{-1pt}\hspace{-1pt}\hspace{-1pt}\hspace{-1pt}\hspace{-1pt}\hspace{-1pt}\hspace{-1pt}\hspace{-1pt}\hspace{-1pt}\hspace{-1pt}\mathbb{E}\big\{\hspace{-1pt}\Vert\boldsymbol{X}\Vert_{2}^{2}\boldsymbol{X}|\boldsymbol{Y}\big\}\hspace{-1pt}\hspace{-1pt}\hspace{-1pt}-\hspace{-1pt}\hspace{-1pt}\mathbb{E}\big\{\hspace{-1pt}\Vert\boldsymbol{X}\Vert_{2}^{2}|\boldsymbol{Y}\big\}\mathbb{E}\{\boldsymbol{X}|\boldsymbol{Y}\}\nonumber \\
 & \equiv\sum_{i\in\mathbb{N}_{N}^{+}}\mathbb{E}\big\{\hspace{-1pt}X_{i}^{2}\boldsymbol{X}|\boldsymbol{Y}\big\}-\mathbb{E}\big\{\hspace{-1pt}X_{i}^{2}|\boldsymbol{Y}\big\}\mathbb{E}\{\boldsymbol{X}|\boldsymbol{Y}\}\nonumber \\
 & \equiv2\sum_{i\in\mathbb{N}_{N}^{+}}\mathbb{E}\{X_{i}|\boldsymbol{Y}\}(\mathbb{E}\{X_{i}\boldsymbol{X}|\boldsymbol{Y}\}-\mathbb{E}\{X_{i}|\boldsymbol{Y}\}\mathbb{E}\{\boldsymbol{X}|\boldsymbol{Y}\})\nonumber \\
 & \equiv2\boldsymbol{\Sigma}_{\boldsymbol{X}|\boldsymbol{Y}}\mathbb{E}\{\boldsymbol{X}|\boldsymbol{Y}\},
\end{flalign}
which in turn implies that, for every $\mu\ge0$,
\begin{flalign}
\widehat{\boldsymbol{X}}_{*}(\mu) & \equiv\dfrac{\mathbb{E}\big\{\hspace{-1pt}\boldsymbol{X}|\boldsymbol{Y}\big\}+\mu\big(\mathbb{E}\big\{\hspace{-1pt}\Vert\boldsymbol{X}\Vert_{2}^{2}\boldsymbol{X}|\boldsymbol{Y}\big\}\hspace{-0.5pt}-\mathbb{E}\big\{\hspace{-1pt}\Vert\boldsymbol{X}\Vert_{2}^{2}|\boldsymbol{Y}\big\}\mathbb{E}\{\boldsymbol{X}|\boldsymbol{Y}\}\big)}{\boldsymbol{I}+2\mu\boldsymbol{\Sigma}_{\boldsymbol{X}|\boldsymbol{Y}}}\nonumber \\
 & \equiv\dfrac{\mathbb{E}\big\{\hspace{-1pt}\boldsymbol{X}|\boldsymbol{Y}\big\}+2\mu\boldsymbol{\Sigma}_{\boldsymbol{X}|\boldsymbol{Y}}\mathbb{E}\{\boldsymbol{X}|\boldsymbol{Y}\}}{\boldsymbol{I}+2\mu\boldsymbol{\Sigma}_{\boldsymbol{X}|\boldsymbol{Y}}}\nonumber \\
 & \equiv\dfrac{(\boldsymbol{I}+2\mu\boldsymbol{\Sigma}_{\boldsymbol{X}|\boldsymbol{Y}})\mathbb{E}\{\boldsymbol{X}|\boldsymbol{Y}\}}{\boldsymbol{I}+2\mu\boldsymbol{\Sigma}_{\boldsymbol{X}|\boldsymbol{Y}}}\nonumber \\
 & \equiv(\boldsymbol{I}+2\mu\boldsymbol{\Sigma}_{\boldsymbol{X}|\boldsymbol{Y}})^{-1}\big(\boldsymbol{I}+2\mu\boldsymbol{\Sigma}_{\boldsymbol{X}|\boldsymbol{Y}}\big)\mathbb{E}\{\boldsymbol{X}|\boldsymbol{Y}\}\nonumber \\
 & \equiv\mathbb{E}\{\boldsymbol{X}|\boldsymbol{Y}\},\quad{\cal P}-a.e.
\end{flalign}
The fact that (\ref{eq:KEY_Coincide}) is true when ${\cal P}_{\boldsymbol{X}|\boldsymbol{Y}}$
is multivariate Gaussian follows from the straightforward application
of Stein's Lemma on all pairs of jointly Gaussian random variables
$(X_{i},X_{j})$, $(i,j)\in\mathbb{N}_{M}^{+}\times\mathbb{N}_{M}^{+}$,
conditioned on $\boldsymbol{Y}$.
\end{proof}
In the next section, we put the risk-aware MMSE estimator to work,
as well as numerically evaluate its performance in comparison with
that of the usual, risk-neutral MMSE estimator.

\section{Numerical Simulations}

\begin{figure}
\centering \includesvg[width=310pt]{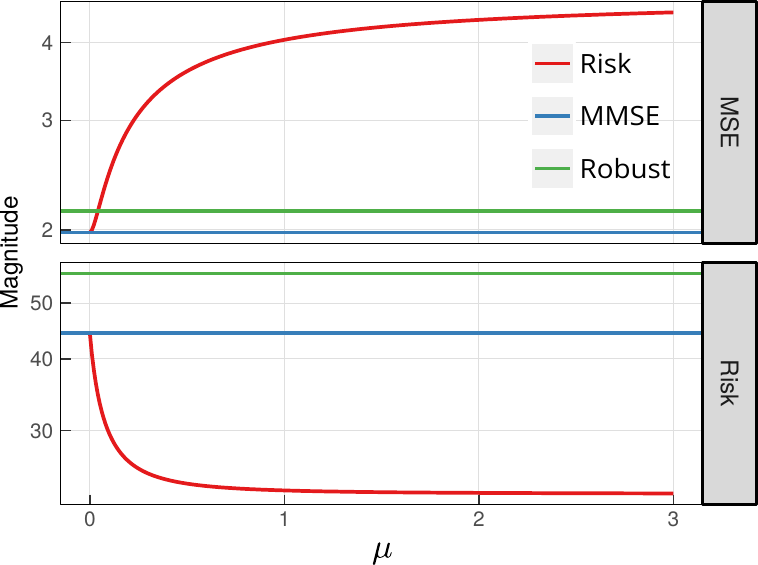} \caption{\label{F:fig2}Mean squared error and risk for different values of~$\mu$
in the state-dependent noise scenario.}
\end{figure}
We evaluate the behavior of the estimator in (\ref{eq:ClosedForm})
in two different scenarios. The first consists of the problem of estimating
an exponentially distributed hidden state $X$, $\mathbb{E}\{X\}=2$,
from the observation $Y=X+v$, where $v$ is a zero-mean Gaussian
random variable independent of~$X$ whose variance is given by $\mathbb{E}\{v^{2}\}=9X^{2}$;
in this case, $v$ constitutes a state-dependent noise. In the second
scenario, the goal is to jointly estimate the random vector $X=\left[z\ \ h\right]^{T}$
from the observation $Y=hz+w$, where $z$ is a zero-mean Gaussian
random variable with variance $\mathbb{E}\{z^{2}\}=2$, $h$ has a
Rayleigh distribution with rate $2$, and $w$ is a zero-mean Gaussian
noise with variance $\mathbb{E}\{w^{2}\}=10^{-1}$. This scenario
is prototypical for estimation problems in communications, where $z$
is the signal of interest and $h$ represents the channel fading.
Throughout the simulations, we also show results for the risk-neutral
MMSE estimator and the Minimum Mean Absolute Error (MMAE) estimator,
or, equivalently, the conditional median relative to the respective
observations, the latter being used as an example of a robust location
parameter estimator.
\begin{figure}
\centering \includesvg[width=328pt]{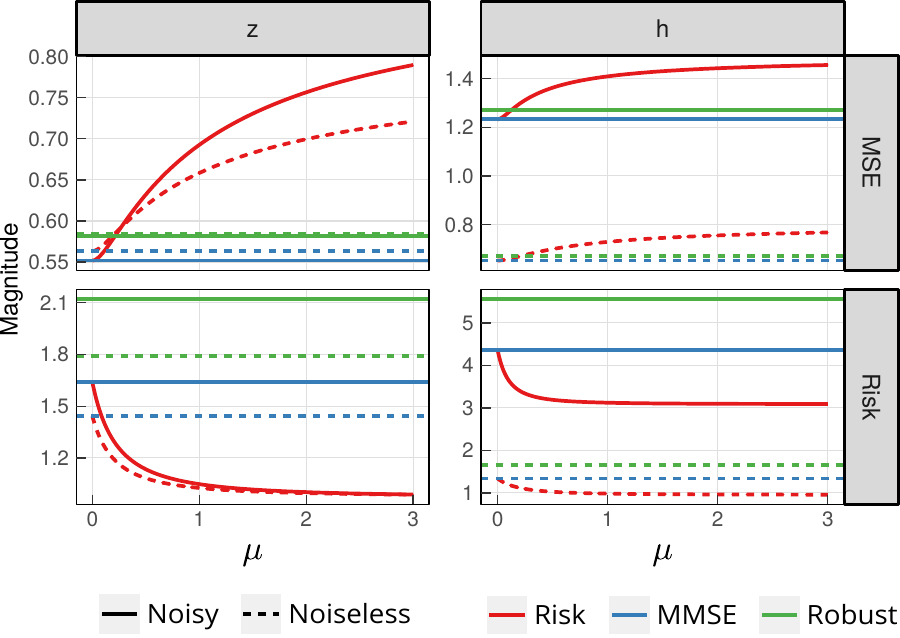} \caption{\label{F:fig3}Mean squared error and risk for different values of
$\mu$ in the communication scenario.}
\end{figure}

In Fig. 1, we saw that the risk-aware estimator yields larger estimates
than the MMSE estimator, in order to account for the certain statistical
ambiguities of the state-dependent noise model. Though this difference
may seem extreme in some instances, e.g., for small values of $Y$
(as in Fig. 1), it is in fact quite effective in reducing the conditional
variance. Indeed, for $Y=0.1$, the risk-aware estimator in Fig. 1
optimally reduces the (conditional) risk by approximately $26\%$
as compared to the risk of the risk-neutral estimator, and this is
achieved by sacrificing average performance, also by a factor of $26\%$.
Of course, this is only one of the operation points of the risk-aware
estimator. In Fig. \ref{F:fig2}, we show results for different values
of $\mu$, where we average over the distribution of $Y$. Observe
that the risk-aware estimator obtained using the constrained optimization
problem (\ref{eq:Base_Problem-V}) achieves a sharp trade-off between
average performance (that is, mean squared error) and risk, which
can be tuned according to the needs of the application. Additionally,
note that the decrease in risk is considerably faster than the increase
in MSE.

Interestingly, a similar phenomenon is observed in the communication
scenario (Fig. \ref{F:fig3}). Again, the risk-aware estimator displays
a much faster initial rate of decrease with respect to $\mu$ than
the rate at which the MSE increases. This is more pronounced in the
estimation of the component $z$, for which the risk-aware estimator
can provide reductions of almost $60\%$ in risk for a $35\%$ increase
in average squared error. Note that, as per Theorem \ref{thm:RiskNeutral},
the Gaussian noise has indeed no bearing on risk-awareness, as evidenced
by the performance in the noiseless case, i.e., for $w=0$ (dashed
lines). To achieve the behavior of Fig. 3, the risk-aware estimator
overestimates both $z$ and $h$ as compared to the MMSE estimator,
as illustrated in Fig. \ref{F:fig4}. In fact, for small values of
$Y$, the former hedges against the event of a deep fade ($h\approx0$)
by maintaining its estimates for $z$ away from zero.
\begin{figure}
\centering \includesvg[width=310pt]{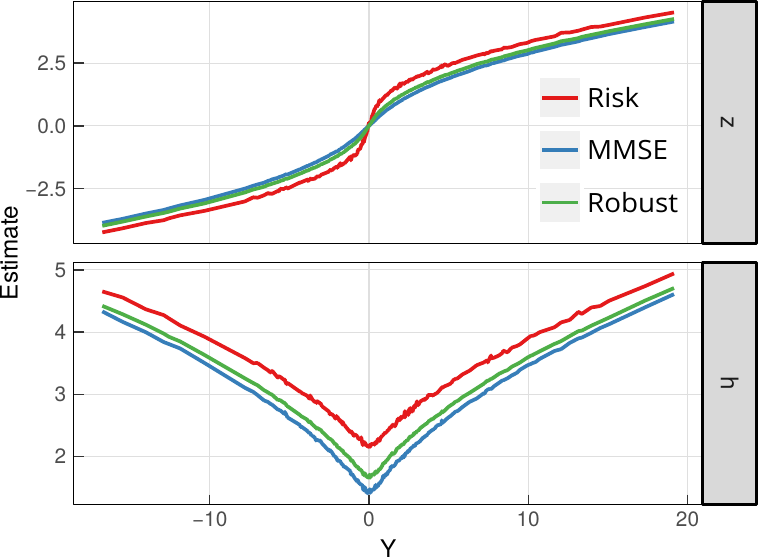} \caption{\label{F:fig4}Risk-aware, MMSE, and robust estimates of $z$ and
$h$ in the communication scenario for different values of $Y$.}
\end{figure}

\section{Conclusions}

We derived a risk-aware MMSE estimator that accounts for statistical
model volatility by hedging against extreme losses. We did so by formulating
a Bayesian risk-aware MMSE estimation problem that minimizes squared
errors on average, subject to an explicit tolerance on their expected
predictive variance. We then showed that this problem admits a analytical
solution under mild moment boundedness assumptions, and results in
a risk-aware, biased nonlinear MMSE estimator. The effectiveness of
our approach was confirmed via several numerical simulations. Future
work includes further analysis of the statistical properties of the
proposed estimator, as well as study of other constrained risk-aware
formulations.

%\newpage{}

 \bibliographystyle{IEEEbib}
\bibliography{library_fixed}

\end{document}